\theoremstyle{plain}
\newtheorem{theorem}{Theorem}[section]
\newtheorem{case}{Case}
\newtheorem{corollary}[theorem]{Corollary}
\newtheorem{lemma}[theorem]{Lemma}
\newtheorem{remark}[theorem]{Remark}
\numberwithin{equation}{section}
\begin{document}
\title[Numbers with countable expansions] {Numbers with countable expansions in base of generalized golden ratios}
\author{YUEHUA GE \& BO TAN$^{\dag}$}
\address{Huazhong University of Science and Technology, Wuhan, 430074, P.R. China}
\email{geyuehua1001@126.com, tanbo@mail.hust.edu.cn }
\thanks{$^{\dag}$Corresponding author.}
\keywords{beta-expansions, countable expansions, generalized golden ratio}
\begin{abstract}
Sidorov and Vershik showed that in base $G=\frac{\sqrt{5}+1}{2}$ and with the digits $0,1$ the numbers $x=nG ~(\text {mod} 1)$ have $\aleph_{0}$ expansions for any $n\in\mathbb{Z}$, while the other elements of $(0, \frac{1}{G-1})$ have $2^{\aleph_{0}}$ expansions. In this paper, we generalize this result to the generalized golden ratio base $\beta=\mathcal{G}(m)$. With the digit-set  $\{0,1,\cdots, m\}$, if
  $m=2k+1$, $\mathcal{G}(m)=\frac{k+1+\sqrt{k^{2}+6k+5}}{2}$, the numbers $x=\frac{p\beta+q}{(k+1)^{n}}\in(0, \frac{m}{\beta-1})$ (where $n, p, q\in\mathbb{Z}$) have $\aleph_{0}$ expansions, while the other elements of $(0, \frac{m}{\beta-1})$ have $2^{\aleph_{0}}$ expansions;
  if $m=2k$, $\mathcal{G}(m)=k+1$, the numbers with countably many expansions are $\frac{p}{(k+1)^{n}}\in(0, 2) ~(n, p\in\mathbb{N}\cup\{0\})$.
 This solves an  open  question by Baker.
\end{abstract}
\maketitle

\section {introduction}

The  $\beta-$expansion extends the representation of real numbers from the integer base (e.g. the familiar decimal or binary expansions) to the non-integer base. It was introduced by  R\'{e}nyi \cite{R}, and was developed by Parry \cite{P}. Since then this kind of  expansion was extensively studied from various viewpoints, and these studies have connections with many  fields such as topology, symbolic dynamical system and combinatorics  \cite{B, DR, IT, KL4,  NS, S}.

Given a base and a digit-set,  we consider all the possible expansions of the numbers (rather than the greedy expansion only).
In this setting, some authors have studied that the set of real numbers which admit a unique expansion from various aspects such as the topological structure, the metric property, or the fractal dimension, see
  e.g. \cite{BK, EHJ, K, KL, KL2, LTW, KL3, VMK, VM}.

  \smallskip

  Let $\Omega$ be a finite set, called an alphabet or a digit-set. For any  $n\in\mathbb{N}$,  we put $$\Omega^{n}=\{ x_{1}x_{2}\ldots x_{n}: x_{i}\in\Omega \text{ for }i=1, 2, \ldots, n\},$$
and
$$\Omega^{\ast}=\bigcup_{n\geq0}\Omega^{n}, $$
  here, we put, by convention, that $\Omega^0=\{\emptyset\}$ with $\emptyset$ the empty word. We also set
  $$\Omega^{\infty}=\{ x_{1}x_{2}x_{3}\ldots: x_{i}\in\Omega  \text{ for } i\geq1\}. $$

Let $m\in\mathbb{N}, \beta\in(1, m+1]$ and $I_{\beta, m}=[0, \frac{m}{\beta-1}]$. It is easy to see that each $x\in I_{\beta, m}$ has an expansion of the form
\begin{equation}\label{def}
   x=\sum_{i=1}^{\infty}\frac{\varepsilon_{i}}{\beta^{i}}
\end{equation}
 for some $(\varepsilon_{i})_{i=1}^{\infty}\in\{0, 1, \ldots, m\}^{\infty}$, whence we say that  the sequence $0. \varepsilon_{1}\varepsilon_{2}\ldots$ is a $\beta$-expansion of $x$, or say that $x$ is the value of $0. \varepsilon_{1}\varepsilon_{2}\ldots$.

 To simplify the notation, instead of the equation (\ref{def}) we  write $$x=0. \varepsilon_{1}\varepsilon_{2}\ldots(\beta), $$ or for short  $$x=0. \varepsilon_{1}\varepsilon_{2}\ldots$$ if it causes no confusion. If an expansion ends by $0^{\infty}$ (i.e. there exists $n\in\mathbb{N}$ such that $\varepsilon_{k}=0$ for $k>n$), we call it a finite expansion; otherwise, we call it an infinite expansion. We always identify a finite expansion $0. \varepsilon_{1}\varepsilon_{2}\ldots\varepsilon_{n}00\ldots$ with   $0. \varepsilon_{1}\varepsilon_{2}\ldots\varepsilon_{n}$.

In this paper, the formula such as $0. \varepsilon_{1}\varepsilon_{2}\ldots=0. \eta_{1}\eta_{2}\ldots$ always means that the corresponding values are equal, while the formula such as  $\varepsilon_{1}\varepsilon_{2}\ldots=\eta_{1}\eta_{2}\ldots$ means the sequences are exactly the same, i.e. $\varepsilon_{1}=\eta_{1}, \varepsilon_{2}=\eta_{2}, \ldots$.

Given $x\in I_{\beta, m}$, the set of the expansions of $x$ is denoted by
 $$\mathcal{E}_{\beta, m}(x)=\{(\varepsilon_{i})_{i=1}^{\infty}\in\{0, 1, \ldots, m\}^{\infty}:
x=\sum_{i=1}^{\infty}\frac{\varepsilon_{i}}{\beta^{i}}\}.$$

 Let us remark that   we do not impose any other restrictions on the digits $\varepsilon_n$   than the equation (\ref{def}), and thus it was believed that any number will have more than one expansions. In   fact, it was proven that in the case that $\beta<m+1$, $a.e.~  x\in I_\beta$ has $2^{\aleph_{0}}$ such expansions, see \cite{DV, S}.

  Erd\H{o}s, Horv\'{a}th and Jo\'{o} \cite{EHJ} and Erd\H{o}s, Jo\'{o} and Komornik   \cite{EJK} showed that, with digits 0 or 1, when $1<\beta<\frac{\sqrt{5}+1}{2}$ each interior point of $I_{\beta}$ has a continuum of distinct expansions, and there exist infinitely many numbers $1<\beta<2$ for which the expansion of 1 is  unique. Any endpoint  of $[0, \frac{1}{\beta-1}]$ obviously has a unique expansion. Daroczy and Katai \cite{DK} showed that when $\beta\in(\frac{\sqrt{5}+1}{2}, 2]$ there exists $x\in(0, \frac{1}{\beta-1})$ such that the set $\mathcal{E}_{\beta, 1}(x)$ is singleton.
 Sidorov \cite{S2} proved that, for  $1<\beta\leq2$, the set of numbers $x\in I_{\beta}$ having less than a continuum of distinct expansions is
\begin{itemize}
   \item the two-point set of the endpoints of $I_{\beta}$ if $\beta< G$;
   \item countable infinite if $G\leq \beta<\beta_{c}$;
   \item a continuum of Hausdorff dimension 0 if $\beta=\beta_{c}$;
   \item a continuum of Hausdorff dimension strictly between 0 and 1 if $\beta_{c}<\beta<2$;
   \item the complementer of a countable set in $[0,1]$ if $\beta=2$,
 \end{itemize}
where $G=\frac{\sqrt{5}+1}{2}$ is the golden ratio, and $\beta_{c}\approx1.787\ldots$ is the ``Komornik-Loreti" constant, the smallest base under which the expansion of the number $1$  is unique  \cite{KL}. Moreover, $\beta_{c}$ is transcendental  \cite{AC}.
 Sidorov and  Vershik  \cite{SV} then studied the numbers with countable expansions under the base of golden ratio, and  showed that  for  $\beta=G=\frac{1+\sqrt{5}}{2}$ the set $\mathcal{E}_{\beta, 1}(x)$ is countable when $x=\beta n (\text{mod }1)$ for any $n\in\mathbb{Z}$, while for others $x$ in $(0, \frac{1+\sqrt{5}}{2})$ the set $\mathcal{E}_{\beta, 1}(x)$ is uncountable.

 Baker  \cite{B} generalized the results in  \cite{DK, EJK}. He defined a generalized  golden ratio $\mathcal{G}(m)$ for any  $m\in\mathbb{N}$,
$$\mathcal{G}(m)=\left\{
  \begin{array}{cl}
     k+1, & \hbox{ if  $m=2k$,} \\
    \frac{k+1+\sqrt{k^{2}+6k+5}}{2}, & \hbox{ if  $m=2k+1$,}
  \end{array}
\right.$$
and showed that for  $\beta\in(1, \mathcal{G}(m))$,  the set $\mathcal{E}_{\beta, m}(x)$ is uncountable  for each $x\in(0, \frac{m}{\beta-1})$, and for $\beta\in(\mathcal{G}(m), m+1]$ there exist $x\in(0, \frac{m}{\beta-1})$ such that the set $\mathcal{E}_{\beta, m}(x)$ is singleton.
Besides, in the same paper he posed the following  open problem:

\smallskip

Does an analogue of this  statement for numbers with exactly countably many expansions hold in the case of general $m\in \mathbb{N}? $

\smallskip

We consider this problem in this paper.    We have the  following main results.

\begin{theorem}\label{theorem2} Let  $m=2k$ be even,   $\beta=\mathcal{G}(m)=k+1$, and let $$\mathcal{F}=\{\frac{p}{(k+1)^{n}}\in(0, 2): n, p\in\mathbb{N}\cup\{0\}\}.$$
The elements of $\mathcal{F}$ have countably many expansions, while the other elements of $(0, 2)$ have uncountably many expansions.
\end{theorem}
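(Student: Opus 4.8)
The plan is to use that $\beta=k+1$ is an \emph{integer} base with the ``doubly redundant'' alphabet $\{0,1,\dots,2k\}=\{0,1,\dots,2b-2\}$, where $b:=k+1$, and that $I_{\beta,m}=[0,2]$. To a word $(\varepsilon_i)_{i\ge1}$ with $x=\sum_{i\ge1}\varepsilon_i\beta^{-i}$ we attach the orbit $x_0=x$, $x_n=\beta x_{n-1}-\varepsilon_n$; then $(\varepsilon_i)\in\mathcal{E}_{\beta,m}(x)$ iff $x_n\in[0,2]$ for all $n$, and for $x_{n-1}\in(0,2)$ the admissible digits $\varepsilon_n$ are exactly the integers in $[\beta x_{n-1}-2,\beta x_{n-1}]\cap[0,2b-2]$. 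An elementary computation gives the dichotomy: this set is a singleton precisely when $x_{n-1}\in(0,1/b)$ (forcing $\varepsilon_n=0$, $x_n=\beta x_{n-1}$) or $x_{n-1}\in(2-1/b,2)$ (forcing $\varepsilon_n=2b-2$, $x_n=\beta x_{n-1}-(2b-2)$); on the remaining branching region $B:=[1/b,2-1/b]$ there are at least two admissible digits, among which one can be chosen with $x_n\in[0,1]$ and another with $x_n\in[1,2]$.

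\emph{Elements of $\mathcal F$ have countably many expansions.} The case $x=1$ will follow from the computation below, so assume $x=p/b^{N}$ with $N\ge1$ and $x\notin\mathbb Z$. Along any expansion $x_n=q_n/b^{N-n}$ with $q_n\in\mathbb Z$ for $0\le n\le N$, so $x_N\in\mathbb Z\cap[0,2]=\{0,1,2\}$. As each step offers at most $2b-1$ admissible digits, only finitely many prefixes $\varepsilon_1\cdots\varepsilon_n$ ($n\le N$) first steer the orbit into $\{0,1,2\}$, and every expansion of $x$ is the concatenation of one such prefix with an element of $\mathcal{E}_{\beta,m}(x_n)$, $x_n\in\{0,1,2\}$. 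It thus suffices to see that $\mathcal{E}_{\beta,m}(0),\mathcal{E}_{\beta,m}(1),\mathcal{E}_{\beta,m}(2)$ are countable. By the dichotomy, $\mathcal{E}_{\beta,m}(0)=\{0^{\infty}\}$ and $\mathcal{E}_{\beta,m}(2)=\{(2b-2)^{\infty}\}$; from $1$ the admissible digits $b-2,b-1,b$ lead back to $2,1,0$ respectively, whence
\[
\mathcal{E}_{\beta,m}(1)=\bigl\{(b-1)^{j}(b-2)(2b-2)^{\infty}:j\ge0\bigr\}\cup\bigl\{(b-1)^{j}\,b\,0^{\infty}:j\ge0\bigr\}\cup\bigl\{(b-1)^{\infty}\bigr\},
\]
which is countably infinite. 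Hence $\mathcal{E}_{\beta,m}(x)$ is a finite union of countable sets.

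\emph{Points of $(0,2)\setminus\mathcal F$ have $2^{\aleph_0}$ expansions.} Such an $x$ is not a $b$-adic rational, and since $x_n=\beta^{n}x-(\text{integer})$ along any expansion, no orbit point is $b$-adic; in particular every $x_n$ lies in $(0,2)$ and is never $0,1,2,1/b$ or $2-1/b$. I would build a continuum of expansions by the following strategy. Starting from $x$, apply the forced step while the orbit lies in $(0,1/b)\cup(2-1/b,2)$; the forced step multiplies the distance from the nearer endpoint of $[0,2]$ by $b>1$ and, since $b\cdot(1/b)=1<2-1/b$ (as $b\ge2$), cannot move a point of $(0,1/b)$ past $1$ in one step, and symmetrically on the right; hence after finitely many steps the orbit enters the interior of $B$, where by the dichotomy (and non-$b$-adicity) there are two choices, one sending the orbit into $(0,1)$ and one into $(1,2)$. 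From either resulting point we again run forced steps back into $B$, and iterate. Recording the binary choice at the $j$-th visit to $B$ defines $\sigma\colon\{0,1\}^{\infty}\to\mathcal{E}_{\beta,m}(x)$; if $\omega\ne\omega'$ first differ in coordinate $j$, then $\sigma(\omega)$ and $\sigma(\omega')$ agree up to the $j$-th visit to $B$ and then differ in the corresponding digit, so $\sigma$ is injective and $|\mathcal{E}_{\beta,m}(x)|=2^{\aleph_0}$.

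The only delicate point — a soft one here, the real difficulty being reserved for the odd case — is the verification embedded in the last step: that the forced steps always return the orbit to the branching region in finitely many steps without overshooting it, and that two branches created at a visit to $B$ can never later re-merge. Both rest on the inequality $1<2-1/b$ ($b\ge2$) together with the non-$b$-adic hypothesis, which keeps the orbit away from precisely the points $0,1,2,1/b,2-1/b$ at which these estimates would be tight.
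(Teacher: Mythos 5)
Your argument is correct, and it takes a genuinely different route from the one the paper intends. The paper proves only the odd case in detail and asserts that Theorem~\ref{theorem2} follows ``by the same idea'': one would first show that $x$ has a finite expansion iff $x\in\mathcal F$ (the analogue of Lemma~\ref{xs}, via the recursion $F_{n+1}=(k+1)(F_n+F_{n-1})$ and the closure lemmas), then that countably many expansions forces a finite expansion (the analogue of Lemma~\ref{coun-finite}, via the block analysis), and finally that a finite expansion yields only countably many expansions, reducing to the countability of $\mathcal E_{\beta,m}(1)$ and the injectivity of the carry maps $T^{\pm}$. You instead exploit the fact that $\beta=k+1=b$ is an \emph{integer}: the orbit $x_n=\beta x_{n-1}-\varepsilon_n$ preserves $b$-adic rationality, so for $x=p/b^{N}$ every expansion funnels through $x_N\in\{0,1,2\}$, whose expansion sets you compute explicitly, while for non-$b$-adic $x$ the orbit avoids the finitely many tight points $0,1,2,1/b,2-1/b$, hence returns to the branching region $[1/b,\,2-1/b]$ infinitely often, each visit contributing an independent binary choice of digit. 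I checked the dichotomy of admissible digits, the forced-return estimate $b\cdot(1/b)=1<2-1/b$, and the computation of $\mathcal E_{\beta,m}(0),\mathcal E_{\beta,m}(1),\mathcal E_{\beta,m}(2)$; all are right. Your proof is shorter and entirely self-contained (no sequence $F_n$, no carry/borrow maps, no $B$-separation), at the price of being specific to the integer base: nothing here adapts to Theorem~\ref{theorem}, where $\beta$ is irrational and the arithmetic of $\mathcal S$ really is needed. Two small remarks. First, injectivity of your coding map already follows from the fact that the two branches at the $j$-th visit to the branching region write \emph{different digits at the same position}, so the ``no later re-merging'' verification you flag as delicate is not actually needed. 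Second, to get exactly $\aleph_0$ (and not merely at most countably many) expansions for $x\in\mathcal F\setminus\{1\}$: along any expansion, at the step just before the orbit first enters $\{0,1,2\}$ one has $x_{n-1}=q/b$ with $q\in\{1,\dots,2b-1\}$, and the digit $q-1$ is always admissible and sends the orbit to $1$, which by your computation has infinitely many expansions.
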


\begin{theorem}\label{theorem} Let $m=2k+1$ be odd, $\beta=\mathcal{G}(m)=\frac{k+1+\sqrt{k^{2}+6k+5}}{2}$, and let $$\mathcal{S}=\{\frac{p\beta+q}{(k+1)^{n}}\in(0, \frac{m}{\beta-1}): n, p, q\in\mathbb{Z}\}.$$ The elements of $\mathcal{S}$ have countably many expansions, while the other elements of $(0,\beta-k)$ have uncountably many expansions.
\end{theorem}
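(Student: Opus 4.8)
The strategy is to combine the Pisot property of $\beta$ with a Cantor--Bendixson analysis of the tree of expansions. First I would record the algebra: from $\beta=\mathcal G(2k+1)$ one gets $\beta^{2}=(k+1)(\beta+1)$, hence $\tfrac{m}{\beta-1}=\beta-k$, $\tfrac1\beta=\tfrac{\beta-(k+1)}{k+1}$, and the Galois conjugate $\beta'=k+1-\beta$ satisfies $|\beta'|<1$, so $\beta$ is a quadratic Pisot number. Put $\mathcal S'=\mathbb Z[\beta,(k+1)^{-1}]=\{(p\beta+q)/(k+1)^{n}:n,p,q\in\mathbb Z\}$, a countable dense subring of $\mathbb R$ with $\mathcal S=\mathcal S'\cap(0,\beta-k)$ and $0,\beta-k\in\mathcal S'$. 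Since $1/\beta\in\mathcal S'$, the maps $T_{d}(y)=\beta y-d$ and $y\mapsto(y+d)/\beta$ preserve $\mathcal S'$; so for $x\in(0,\beta-k)$ we have $x\in\mathcal S'$ iff $T_{d}(x)\in\mathcal S'$ for any admissible digit $d$. I regard $\mathcal E_{\beta,m}(x)$ as the set of infinite branches of the expansion tree $\mathcal T(x)$, where a node labelled $y$ has children $T_{d}(y)$ for those $d$ with $T_{d}(y)\in[0,\beta-k]$; a node $y\in(0,\beta-k)$ has two children precisely when $y$ lies in the switch set $\mathcal D=\bigsqcup_{d=0}^{m-1}\mathcal D_{d}$, $\mathcal D_{d}=[\tfrac{d+1}{\beta},\tfrac{d+\beta-k}{\beta}]$ (disjoint because $\beta<k+2$), the two admissible digits on $\mathcal D_{d}$ being $d$ and $d+1$; elsewhere the digit is forced. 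Every endpoint of every $\mathcal D_{d}$ lies in $\mathcal S'$.

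For the uncountable half I would first show that $\mathcal U_{\beta,m}$, the set of numbers with a unique expansion, equals $\{0,\beta-k\}\subseteq\mathcal S'$. The quasi-greedy expansion of $1$ is $\alpha=((k+1)k)^{\infty}$ (the greedy one is $(k+1)(k+1)0^{\infty}$, as $\tfrac{k+1}{\beta}+\tfrac{k+1}{\beta^{2}}=1$), and in the standard description of univoque sequences an interior digit $d_{n}\in\{1,\dots,m-1\}$ forces the tail $d_{n+1}d_{n+2}\cdots$ strictly between $\overline\alpha=(k(k+1))^{\infty}$ and $\alpha$; the period-$2$ shape of $\alpha$ makes this condition propagate and become contradictory, so a univoque sequence can use only the digits $0$ and $m$, forcing it to be $0^{\infty}$ or $m^{\infty}$. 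Now take $x\in(0,\beta-k)\setminus\mathcal S$, so $x\notin\mathcal S'$ and $x\notin\mathcal U_{\beta,m}$; hence a finite admissible word brings $x$ to some $y\in\mathcal D$, all intermediate values stay in $(0,\beta-k)\setminus\mathcal S'$ by invariance of $\mathcal S'$ together with $0,\beta-k\in\mathcal S'$, and since the endpoints of each $\mathcal D_{d}$ lie in $\mathcal S'$, $y$ lies in the open interior of some $\mathcal D_{d}$. Then both $T_{d}(y),T_{d+1}(y)\in(0,\beta-k)\setminus\mathcal S'$ and we iterate, building a subtree of $\mathcal T(x)$ which after contracting forced segments is binary-branching everywhere; hence $\mathcal E_{\beta,m}(x)$ is uncountable.

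For the countable half, fix $x\in\mathcal S$, say $x=(p_{0}\beta+q_{0})/(k+1)^{n_{0}}$. Every node $y$ of $\mathcal T(x)$ has the form $(p\beta+q)/(k+1)^{n_{0}}$, and along each edge its conjugate transforms by $y^{*}\mapsto\beta'y^{*}-d$; since $|\beta'|<1$ this gives $|y^{*}|\le|x^{*}|+m/(1-|\beta'|)$ for all nodes. Thus $y\mapsto(y,y^{*})$ sends the node set into a bounded region of the full-rank lattice $\tfrac1{(k+1)^{n_{0}}}\{(p\beta+q,p\beta'+q):p,q\in\mathbb Z\}$, hence onto a finite set, and it is injective; so $\mathcal T(x)$ has only finitely many distinct labels and $\mathcal E_{\beta,m}(x)$ is the path space of a finite directed graph $\mathcal G(x)$. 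Such a path space is finite, countably infinite, or of cardinality $2^{\aleph_{0}}$, the last case occurring exactly when some vertex reachable from $x$ lies on two distinct cycles. If $v$ lies on a cycle with digit word $w$ of length $\ell$, then $v=[w]_{\beta}/(\beta^{\ell}-1)$ with $[w]_{\beta}\in\mathbb Z[\beta]$; because $\beta^{2}\equiv0\pmod{k+1}$ in $\mathbb Z[\beta]$ one checks that $\beta^{\ell}-1$ is a unit modulo $k+1$ for every $\ell\ge1$, and a short module-theoretic argument then forces $v\in\mathbb Z[\beta]$, so every cycle vertex is an algebraic integer in $(0,\beta-k)$ with conjugate bounded independently of $x$ — drawn from a fixed finite set.

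The remaining step, which I expect to be the main obstacle, is to show that no such $v$ lies on two cycles; equivalently, that at every branching vertex $v\in\mathbb Z[\beta]\cap\mathcal D_{c}$ at least one of $T_{c}(v)=\beta v-c\in[1,\beta-k]$ and $T_{c+1}(v)=\beta v-c-1\in[0,\beta-k-1]$ is not recurrent but is driven by the forced dynamics into one of the absorbing states $0^{\infty},m^{\infty}$ (exactly as in the golden-ratio case, where the only such $v$ are $1$ and $\beta-(k+1)$). I would attack this by analysing the forced maps on the gaps between, and on the flanks of, the intervals $\mathcal D_{d}$, looking for a monovariant — on the $(k+1)$-adic level, on $|v^{*}|$, or on the position relative to the symmetry centre $\tfrac{m}{2(\beta-1)}$ — that strictly decreases along one of the two branches, preventing both from re-entering a common strongly connected set. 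Granting this, $\mathcal E_{\beta,m}(x)$ is countable for every $x\in\mathcal S$, which together with the uncountable half proves the theorem; the even case $m=2k$, $\beta=k+1$ runs along the same lines, with $\mathbb Z[\beta]$ replaced by $\mathbb Z$ and the Pisot/conjugate argument replaced by the elementary fact that $\mathbb Z[(k+1)^{-1}]\cap[0,2)$ is the relevant countable set.
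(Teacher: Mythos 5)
Your framework --- the expansion tree, the switch regions $\mathcal D_{d}$, and the Pisot finiteness argument showing that for $x\in\mathcal S$ the labels of the tree form a finite set, so that $\mathcal E_{\beta,m}(x)$ is the path space of a finite directed graph --- is sound, and the uncountable half is essentially completable once the claim $\mathcal U_{\beta,m}=\{0,\beta-k\}$ is actually verified rather than sketched. But the countable half is not proved. The finite-graph reduction only tells you that $\mathcal E_{\beta,m}(x)$ is finite, countably infinite, or of cardinality $2^{\aleph_{0}}$; ruling out the third alternative, i.e.\ showing that no reachable vertex lies on two distinct cycles, is precisely the substantive content of the theorem for $x\in\mathcal S$, and you explicitly defer it (``Granting this\dots''), offering only a list of candidate monovariants you might try. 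Note that the branching vertices of your graph are exactly the reachable points of $\mathcal D\cap\mathcal S'$, and at each of them both children genuinely lie in $[0,\beta-k]$; what must be shown is that at most one of the two outgoing branches can return to a branching vertex infinitely often, and nothing in your write-up establishes this. The proof is therefore incomplete at its crucial point.

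For comparison, the paper closes exactly this gap by a different device: it shows (i) that $x$ has a finite expansion if and only if $x\in\mathcal S$ (via the recurrence $F_{n+1}=(k+1)(F_{n}+F_{n-1})$ and closure of finite expansions under addition and division by $k+1$); (ii) that the expansions of $1$ can be completely listed and are countable; and (iii) that if some $x\in\mathcal S$ had uncountably many expansions, then adding a finite expansion of $1-x\in\mathcal S$ digitwise and resolving the carries with the maps $T^{+}$ (which are injective on each fiber of $\mathrm{Ind}^{+}$) would manufacture uncountably many expansions of $1$, a contradiction. If you wish to keep the graph-theoretic setup, you would still need either to import such an argument or to carry out the cycle analysis of the finite graph concretely; as it stands, the main claim of the countable half is assumed rather than proved. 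A secondary, fixable issue: you should also check that $\mathcal E_{\beta,m}(x)$ is infinite (not merely countable) for $x\in\mathcal S$, which follows from the countably many expansions of $1$ appended to a finite expansion of $x$.
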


Following the same idea as the proof of Theorem \ref{theorem}, we may prove Theorem \ref{theorem2}.
So, in this paper, we will devote ourselves to the proof of Theorem \ref{theorem}. In the next section,  some necessary  preliminaries are presented, and Theorems are proven in the last section.

\section{preliminaries}
We consider the case that $m=2k+1$ for $k\in\mathbb{N}$,  whence the digits set  $\Omega=\{0,1,\cdots, 2k+1\}$,
 the generalized golden ration $\beta=\mathcal{G}(m)=\frac{k+1+\sqrt{k^{2}+6k+5}}{2}$,  and $I_{\beta, m}=[0,\frac{m}{\beta-1}]=[0, \beta-k]$.
Recall that $\beta$ satisfies  the algebraic equation $$\frac{k+1}{\beta}+\frac{k+1}{\beta^{2}}=1.$$

There are several digit sets need to be considered. We define  the small-digit set   to be $S=\{0, 1, \ldots, k\}$,  and the big-digit set
 $B=\{k+1, k+2,  \ldots, 2k+1\}.$   Also we put  $S^{-}=\{0, 1, \ldots, k-1\} $ and $S_{-}=\{1,2,\ldots, k\}$ by removing the smallest element and the biggest one from $S$ respectively. In the same way,
 $B_-=\{k+2,\ldots,2k+1\}$ and $B^-=\{k+1,\ldots, 2k\}$.

\subsection{Sequence $F_{n}$}
In this subsection, we define a sequence $F_{n}$ which has some properties relating with $\beta$.
\begin{lemma}\label{n=fn}
Define  $\{F_{n}\}_{n\geq1}$ to be the integer sequence satisfying $$F_{n+1}=(k+1)(F_{n}+F_{n-1})\quad (n\geq2)$$ with $F_{1}=1, F_{2}=k+1$. Then for any integer $n\ge0$, there exists a finite sequence $\{n_{i}\}_{i=1}^{l}\in\{0, 1, \ldots, (k+1)\}^{\ast}$ such that
\begin{equation}\label{n}
n=\sum_{i=1}^{l}n_{i}F_{i},
\end{equation}
where $l=l(n)$ is dependent of  $n$.
\end{lemma}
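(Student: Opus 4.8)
The plan is to prove, by induction on $j$, the following stronger statement: for every $j\ge 1$, each integer $m$ with $0\le m\le (k+1)\sum_{i=1}^{j}F_i$ can be written as $m=\sum_{i=1}^{j}n_iF_i$ with every digit $n_i\in\{0,1,\ldots,k+1\}$. Granting this, the lemma follows at once: since $F_1=1$, $F_2=k+1$ and $F_{j+1}=(k+1)(F_j+F_{j-1})>F_j$, the $F_j$ are positive and strictly increasing, so $\sum_{i=1}^{j}F_i\to\infty$; given $n\ge 0$ we simply pick $j$ with $(k+1)\sum_{i=1}^{j}F_i\ge n$ and apply the statement with $l=j$ (digits that come out $0$ are harmless).

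Before the induction I would record the single arithmetic fact it rests on. Writing $T_j=\sum_{i=1}^{j}F_i$, I claim $(k+1)T_j\ge F_{j+1}$ for all $j\ge 1$. For $j\ge 2$ this is immediate from the recurrence: $(k+1)T_j-F_{j+1}=(k+1)T_j-(k+1)(F_j+F_{j-1})=(k+1)\sum_{i=1}^{j-2}F_i\ge 0$ (the empty sum when $j=2$), and for $j=1$ it is the identity $(k+1)\cdot 1=F_2$. I also note the trivial relation $(k+1)T_{j+1}=(k+1)T_j+(k+1)F_{j+1}$, which will handle the top digit.

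Now for the induction. The base case $j=1$ is clear, since any $m\in\{0,1,\ldots,k+1\}$ equals $m\cdot F_1$. For the inductive step, assume the statement for $j$ and take $m$ with $0\le m\le (k+1)T_{j+1}$. Let $d$ be the largest element of $\{0,1,\ldots,k+1\}$ with $dF_{j+1}\le m$, and set $m'=m-dF_{j+1}\ge 0$. If $d<k+1$, then $m<(d+1)F_{j+1}$, hence $m'<F_{j+1}\le (k+1)T_j$; if $d=k+1$, then $m'=m-(k+1)F_{j+1}\le (k+1)T_{j+1}-(k+1)F_{j+1}=(k+1)T_j$. In either case $0\le m'\le (k+1)T_j$, so the inductive hypothesis gives $m'=\sum_{i=1}^{j}n_iF_i$ with $n_i\in\{0,\ldots,k+1\}$; putting $n_{j+1}=d$ yields the required representation of $m$ on positions $1,\ldots,j+1$.

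The computations are elementary, so I do not expect a genuine obstacle. The one point needing care is the selection of the leading digit: the naive greedy choice $\lfloor m/F_{j+1}\rfloor$ can be as large as $2k+1$ and so must be capped at $k+1$, and it is exactly the inequality $(k+1)T_j\ge F_{j+1}$ — i.e.\ the fact that positions $1,\ldots,j$ already ``overshoot'' $F_{j+1}$ — that guarantees the capped remainder $m'$ still lies in the range covered by the inductive hypothesis.
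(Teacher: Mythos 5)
Your proof is correct and follows essentially the same route as the paper's: both arguments extract the leading digit greedily (bounded by $k+1$), subtract, and induct. The only difference is bookkeeping — the paper inducts on the range $[0,F_{m})$ and uses $F_{m+1}\le (k+2)F_{m}$ so the greedy quotient is automatically $\le k+1$, whereas you induct on the number of positions with the sharp range $\bigl[0,(k+1)\sum_{i=1}^{j}F_{i}\bigr]$ and cap the digit explicitly via $(k+1)\sum_{i=1}^{j}F_{i}\ge F_{j+1}$; your version is a slightly stronger (exact) statement but the mechanism is the same.
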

We remark that the expansion (\ref{n}) of $n$ is by no means unique. While we can require that $l(n)<l$ if $n<F_{l}$.
\begin{proof}
The case    $n=0$  or $n=1$ is trivial. Now by induction, we assume that the conclusion holds for $0\leq n<F_{m}$.

Noticing the fact that $F_{m+1}<(k+2)F_{m}$ and $$[F_{m},(k+2)F_{m})=\bigcup_{j=0}^{k+1}[jF_{m}, (j+1)F_{m}).  $$
When $F_{m}\leq n <F_{m+1}$, we have that
$n\in[jF_{m}, (j+1)F_{m})$ for some $j\in\{0, 1, \ldots, k+1\}$, and $0\leq n-jF_{m}<F_{m}$. By the hypothesis of induction, $$n-jF_{m}=\sum_{i=1}^{m-1}\widetilde{n_{i}}F_{i}, $$ hence, we take $n_{m}=j$ together with $n_{i}=\widetilde{n_{i}}$ for $i<m$ to obtain an expansion of $n$.

\end{proof}

\begin{lemma}\label{fn}
For any $n\in\mathbb{N}$, $F_{n}\beta=F_{n+1}-(-\frac{k+1}{\beta})^{n}$.
\end{lemma}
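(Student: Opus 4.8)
The plan is to solve the linear recurrence for $F_n$ in closed form and read off the identity. First I set $F_0=0$; then the given recurrence $F_{n+1}=(k+1)(F_n+F_{n-1})$ holds for all $n\geq1$, since $(k+1)(F_1+F_0)=k+1=F_2$. The characteristic equation of this recurrence is $x^2=(k+1)(x+1)$, i.e.\ $x^2-(k+1)x-(k+1)=0$, which is exactly the equation satisfied by $\beta$. Denote by $\gamma$ the other root; by Vieta's formulas $\beta+\gamma=k+1$ and $\beta\gamma=-(k+1)$, so $\gamma=-\frac{k+1}{\beta}$ — this is the observation that makes the stated right-hand side appear, and it also shows $\gamma$ satisfies $\gamma^2=(k+1)(\gamma+1)$.

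Since $F_n$ satisfies a second-order linear recurrence with distinct characteristic roots $\beta\neq\gamma$, there are constants $A,B$ with $F_n=A\beta^n+B\gamma^n$. The initial conditions $F_0=0$, $F_1=1$ give $A+B=0$ and $A\beta+B\gamma=1$, hence $A=-B=\frac{1}{\beta-\gamma}$ and $F_n=\frac{\beta^n-\gamma^n}{\beta-\gamma}$. Substituting,
\[
F_{n+1}-F_n\beta=\frac{\beta^{n+1}-\gamma^{n+1}}{\beta-\gamma}-\frac{\beta(\beta^n-\gamma^n)}{\beta-\gamma}=\frac{\gamma^n\beta-\gamma^{n+1}}{\beta-\gamma}=\gamma^n=\left(-\frac{k+1}{\beta}\right)^{n},
\]
which is the claim.

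An alternative, entirely elementary route avoids the closed form: prove $F_n\beta=F_{n+1}-\gamma^n$ by induction on $n$. The base cases $n=1,2$ reduce to $\beta=(k+1)-\gamma$ and $(k+1)\beta=(k+1)(k+2)-\gamma^2$, both immediate from $\beta+\gamma=k+1$ and $\gamma^2=(k+1)(\gamma+1)$. For the inductive step, multiply the recurrence by $\beta$ and apply the hypothesis twice:
\[
F_{n+1}\beta=(k+1)(F_n\beta+F_{n-1}\beta)=(k+1)(F_{n+1}+F_n)-(k+1)\gamma^{n-1}(\gamma+1)=F_{n+2}-\gamma^{n+1},
\]
using $(k+1)(\gamma+1)=\gamma^2$. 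There is no substantial obstacle here; the only points requiring care are fixing the convention $F_0=0$ so the recurrence is valid from $n=1$ on, and verifying that $-\frac{k+1}{\beta}$ is genuinely the conjugate root of the minimal polynomial of $\beta$, so that $\gamma$ obeys the same recurrence as $F_n$.
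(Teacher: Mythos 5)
Your proof is correct, and your second (inductive) argument is exactly the paper's own proof: the paper observes that both $F_n$ and $\gamma^n=(-\tfrac{k+1}{\beta})^n$ satisfy the recurrence $u_{n+1}=(k+1)(u_n+u_{n-1})$ and concludes by induction, while you additionally spell out the base cases and the verification that $\gamma$ is the conjugate root, which the paper leaves implicit. The closed-form Binet route you give first is an equivalent repackaging of the same observation, so there is nothing substantively different to compare.
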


\begin{proof}

Since $$F_{n+1}=(k+1)(F_{n}+F_{n-1})$$ and $$(-\frac{k+1}{\beta})^{n}=(k+1)(-\frac{k+1}{\beta})^{n-1}+(k+1)(-\frac{k+1}{\beta})^{n-2},  $$
the conclusion follows by induction.
\end{proof}
\subsection{Properties on the $\beta$-expansion}
\begin{lemma}\label{1}
The number $1$ has countably many expansions under the base $\beta$.
\end{lemma}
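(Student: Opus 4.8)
The plan is to show two things: first that $\mathcal E_{\beta,m}(1)$ is infinite, and second that it is not uncountable. For the lower bound, note that since $\beta<m+1$ there is slack in the digit set, so $1$ certainly has at least one expansion; moreover $1$ is an interior point of a subinterval where multiple expansions exist, and by shifting where one "switches branches" one produces infinitely many distinct expansions. Concretely, I would write down the quasi-greedy and the greedy expansions of $1$ explicitly using the defining relation $\frac{k+1}{\beta}+\frac{k+1}{\beta^2}=1$: this gives $1 = 0.(k{+}1)(k{+}1)0^\infty$ as a finite expansion. From a finite expansion one always manufactures infinitely many others: replace a terminal block, e.g. use $1=0.(k{+}1)(k{+}1)$ versus expansions where the last nonzero digit is lowered by $1$ and the tail is filled out with an expansion of the resulting positive remainder (which exists because $\beta<m+1$). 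Iterating this at deeper and deeper positions yields countably infinitely many expansions, so $\mathcal E_{\beta,m}(1)$ is at least countably infinite.

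The substance of the lemma is the upper bound: $\mathcal E_{\beta,m}(1)$ is \emph{not} of cardinality $2^{\aleph_0}$. The key structural fact, which I would isolate first, is a \emph{branching lemma}: when one builds an expansion of a point $x$ digit by digit, at step $n$ the remaining value $r_n = \beta^n(x-\sum_{i\le n}\varepsilon_i\beta^{-i})$ lies in $[0,\beta-k]$, and there is a genuine choice of the next digit $\varepsilon_{n+1}$ (i.e.\ two admissible digits) only when $r_n$ lies in the "overlap region". For the generalized golden ratio $\beta=\mathcal G(m)$ this overlap region is as small as possible — this is exactly the content of Baker's characterization of $\mathcal G(m)$ — and one can check that once $r_n$ enters the overlap region, both branches force $r_{n+1}$ to land at an \emph{endpoint-type} value or back into a region from which the future is essentially determined. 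I would show that an expansion of $1$ can branch only finitely many times: after finitely many steps the orbit $(r_n)$ is trapped in a part of $[0,\beta-k]$ where every subsequent digit is forced, or lands on $0$ or $\beta-k$ (unique tails $0^\infty$ or $m^\infty$). A sequence with only finitely many branch points has only countably many continuations, hence $\mathcal E_{\beta,m}(1)$ is countable.

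To make the "trapped after finitely many branches" claim precise I would track the orbit of $1$ under the two branch maps $T_0(y)=\beta y$ and $T_1(y)=\beta y - (\text{digit})$ restricted to the overlap region, and use Lemma~\ref{fn} together with Lemma~\ref{n=fn}: the relation $F_n\beta = F_{n+1} - (-\tfrac{k+1}{\beta})^n$ shows that the relevant algebraic orbit values are controlled by the $F_n$, and because $\big|\tfrac{k+1}{\beta}\big|<1$ the "error terms" $(-\tfrac{k+1}{\beta})^n\to 0$, so the orbit cannot keep re-entering a fixed small overlap interval infinitely often while staying distinct — it must eventually either exit permanently or hit an exact special value with a unique tail. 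I expect the main obstacle to be pinning down the overlap region exactly and proving that \emph{both} images of the overlap region under the branch maps avoid the overlap region (or meet it only at finitely-describable points); this is the place where the specific value $\beta=\mathcal G(m)$ is used in an essential way, and where the even/odd distinction (the $S$, $B$, $S^-$, $S_-$, $B^-$, $B_-$ digit sets introduced above) enters. Once this combinatorial-geometric bound on branch points is established, the countability of $\mathcal E_{\beta,m}(1)$ follows formally.
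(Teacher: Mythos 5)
Your upper-bound argument rests on the claim that every expansion of $1$ branches only finitely many times, i.e.\ that after finitely many steps the orbit $r_n$ leaves the overlap region for good or lands on a value with a forced tail. That claim is false for this $\beta$. Since $\beta^2=(k+1)\beta+(k+1)$, starting from $r_0=1$ and choosing the digits $k+1$ and then $k$ returns the orbit to $r_2=(k+1)-k=1$; and at the value $1$ the admissible next digits are exactly the integers in $[\beta-(\beta-k),\,\beta]=[k,\beta]$, namely $k$ and $k+1$ (because $k+1<\beta<k+2$), so $1$ is itself a genuine branch point. Hence the expansion $0.((k+1)k)^\infty$ of $1$ passes through infinitely many branch points, and your heuristic that the orbit ``cannot keep re-entering the overlap region while staying distinct'' fails precisely because the orbit keeps returning to the \emph{same} point $1$; the decay of $(-\tfrac{k+1}{\beta})^n$ is irrelevant here. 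So ``finitely many branch points per path, hence no perfect subtree, hence countable'' cannot be the mechanism.

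What actually makes $\mathcal{E}_{\beta,m}(1)$ countable --- and what the paper establishes by a direct case analysis on the first one or two digits --- is that the tree of expansions of $1$ is a comb: the spine $((k+1)k)^\infty$ carries a branch point at every odd position, but at each such branch the departing branch is immediately forced, yielding only $0.((k+1)k)^n k(2k+1)^\infty$ and $0.((k+1)k)^n(k+1)(k+1)0^\infty$ as the off-spine expansions. A comb with one infinitely-branching spine and countably many dead-end teeth still has only countably many paths, but the countability comes from showing the \emph{non-spine} branch is always forced (equivalently, that the perfect kernel of the expansion tree is empty), not from the absence of recurrence into the overlap region. Proving that forcing statement is essentially the paper's case analysis ($\delta_1\in S^-$ and $\delta_1\in B_-$ impossible; $\delta_1=k$ forces $(2k+1)^\infty$; $\delta_1=k+1$ forces $\delta_2\in\{k,k+1\}$ with $\delta_2=k+1$ forcing $0^\infty$ and $\delta_2=k$ recreating an expansion of $1$). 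Your lower-bound discussion is fine in spirit --- the finite expansion $0.(k+1)(k+1)$ does generate infinitely many others --- but the upper bound as proposed has a genuine gap.
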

\begin{proof}
Recalling the fact  that $1.00=0.(k+1)(k+1), $ we have
$0. (k+2)=1. 00(k+1)$ and $0. (2k+1)(2k+1)\ldots=1. (k+1). $

Let $0. \delta_{1}\delta_{2}\ldots\in\{0, 1, \ldots, 2k+1\}^{\infty}$ be an expansion of $1$.
 We consider four cases according as the value of the the first digit $\delta_{1}$.
\begin{case}  $\delta_{1} \in S^{-}. $\end{case}

Since  $0. \delta_{2}\delta_{3}\ldots\leq0. (2k+1)(2k+1)(2k+1)\ldots=0.1(k+1)$,
\begin{eqnarray*}
  0. \delta_{1}\delta_{2}\ldots &\leq& 0. \delta_{1}(2k+1)(2k+1)(2k+1)\ldots \\
   &=& 0. (\delta_{1}+1)(k+1) \\
   &<& 0.(k+1)(k+1)=1.00.
\end{eqnarray*}
This case is impossible.

\begin{case}  $\delta_{1}=k.$\end{case} We know that $0. (2k+1)(2k+1)\ldots=1. (k+1)$, then  $0. \delta_{1}\delta_{2}\ldots=0. (k+1)(k+1)$. Thus in this case the only possibility  is that $0. \delta_{1}\delta_{2}\ldots=0. k(2k+1)(2k+1)\ldots. $

\begin{case}  $\delta_{1}=k+1.$\end{case}

\begin{itemize}
  \item $\delta_{1}=k+1, \delta_{2}\in S^{-}. $

  Since \begin{eqnarray*}
                                                                  0. (k+1)\delta_{2}(2k+1)(2k+1)\ldots &=& 0. (k+1)(\delta_{2}+1)(k+1) \\
                                                                    &<& 0.(k+1)(k+1).
                                                                 \end{eqnarray*}
  Thus this subcase is impossible.
  \item $\delta_{1}=k+1, \delta_{2}=k. $

  In this subcase, we readily check that  $0. \delta_{3}\delta_{4}\ldots$ is again an expansion of 1.
   \item $\delta_{1}=k+1, \delta_{2}=k+1. $

   Clearly, in this subcase the only possibility is that $1=0.(k+1)(k+1)000\ldots.$
  \item $\delta_{1}=k+1, \delta_{2}\in B_{-}.$

  Since $0. (k+1)\delta_{2}>0. (k+1)(k+1)=1, $ it is impossible.
\end{itemize}
\begin{case}
$\delta_{1}\in B_{-}. $
\end{case}
Since $0.(k+2)=1.00(k+1)>1, $ it is impossible.

In conclusion, the expansion of 1 takes one of the forms:
\begin{enumerate}
  \item $1=0. k(2k+1)(2k+1)(2k+1)\ldots$
  \item $1=0. (k+1)(k+1)$
  \item $1=0. (k+1)k \delta_{3}\delta_{4}\ldots$ with $0.\delta_{3}\delta_{4}\ldots $again an expansion of 1.
\end{enumerate}

By an easy induction(on the number of the block $((k+1)k)'$s occurring in the beginning of the expansion), we then obtain all of the expansion of 1 as follows
\begin{itemize}
  \item $0.((k+1)k)^{n}k(2k+1)(2k+1)(2k+1)\ldots$
  \item $0.((k+1)k)^{n}(k+1)(k+1)$
  \item $0.((k+1)k)^{\infty}, $ i.e. $0. (k+1)k (k+1)k (k+1)k \ldots$.
\end{itemize}

\end{proof}

In the following, we study the ``carry" and ``borrow" of the expansion in the light of the formula $1.00=0.(k+1)(k+1)$.

First we introduce a notation ``index" Ind$^{+}$($x$) for a  sequence $x=0.x_{1}x_{2}\ldots x_{n}\ldots\in\{0,1,\ldots, 2k+1\}^{\infty}$    as follows:  for $i\geq1$,
$$ \text{Ind}^{+}(x)=\left\{
  \begin{array}{ll}
    2i-1, & \hbox{if $x_{2j-1}\in S, x_{2j}\in B$ for $j=1,2,\ldots, i-1$ and $x_{2i-1}\in B$;} \\
    2i, & \hbox{if $x_{2j-1}\in S, x_{2j}\in B$ for $j=1,2,\ldots, i-1$ and $x_{2i-1}, x_{2i}\in S$;}\\
    \infty, & \hbox{if $x_{2j-1}\in S, x_{2j}\in B$ for $j=1,2,3,\ldots$.}\\
  \end{array}
\right.$$

Then for any sequence $0.bx_{1}x_{2}x_{3}\ldots$ with $b\in B_-=\{k+2,\ldots,2k+1\}$, we can define the
``carry" map $T^{+}$
 according as the value of the  Ind$^{+}$($x$) as follows:
\begin{itemize}

  \item If Ind$^{+}$($0. x_{1}x_{2}x_{3}\ldots$)=1, then
 $$T^{+}(0.bx_{1}x_{2}x_{3}\ldots) = 1.(b-k-1)(x_{1}-k-1)x_{2}x_{3}\ldots. $$

  \item If Ind$^{+}$($0. x_{1}x_{2}x_{3}\ldots$)=$2i-1$  for any  $i\geq2$, then
\begin{eqnarray*}
   &&  T^{+}(0.bx_{1}x_{2}x_{3}\ldots)\\
   &=& 1.(b-k-2)(x_{1}+1)(x_{2}-1)\ldots(x_{2i-1}+1) x_{2i-2}(x_{2i-1}-k-1)x_{2i}x_{2i+1}\ldots.
\end{eqnarray*}

  \item If Ind$^{+}$($0. x_{1}x_{2}x_{3}\ldots$)=$2i$  for any  $i\geq1$, then
\begin{eqnarray*}
   &&T^{+}(0.bx_{1}x_{2}x_{3}\ldots)  \\
   &=& 1.(b-k-2)(x_{1}+1)(x_{2}-1)\ldots x_{2i-1}(x_{2i}+k+1)x_{2i+1}x_{2i+2}\ldots.
\end{eqnarray*}

  \item If Ind$^{+}$($0. x_{1}x_{2}x_{3}\ldots$)=$\infty$, then
\begin{eqnarray*}
   &&T^{+}(0.bx_{1}x_{2}x_{3}\ldots)  \\
   &=& 1.(b-k-2)(x_{1}+1)(x_{2}-1)(x_{3}+1)(x_{4}-1)\ldots.
\end{eqnarray*}
\end{itemize}

Obviously, for any $k\in\{1,2,\cdots\}\cup\{\infty\}$ the restriction of the map $T^{+}$ to the sequences    $0.bx_{1}x_{2}x_{3}\ldots$ with $b\in B_-$ and Ind$^{+} ( 0. x_{1}x_{2}x_{3}\ldots)=k $ is injective.

In a dual way, we define the  Ind$^{-}$($x$): for $i\geq1$
$$ \text{Ind}^{-}(x)=\left\{
  \begin{array}{ll}
    2i-1, & \hbox{if $x_{2j-1}\in B, x_{2j}\in S$ for $j=1,2,\ldots, i-1$ and $x_{2i-1}\in S$;} \\
    2i, & \hbox{if $x_{2j-1}\in B, x_{2j}\in S$ for $j=1,2,\ldots, i-1$ and $x_{2i-1}, x_{2i}\in B$;}\\
    \infty, & \hbox{if $x_{2j-1}\in B, x_{2j}\in S$ for $j=1,2,\ldots$ .}
  \end{array}
\right.$$

Then for any sequence $1.a x_{1}x_{2}x_{3}\ldots$ with $a\in S^{-}=\{0,\ldots, k-1\}$, we   define the
``borrow" map $T^{+}$  as follows:

\begin{itemize}
  \item If Ind$^{-}$($0. x_{1}x_{2}x_{3}\ldots$)=1, then
 $$T^{-}(1.a x_{1}x_{2}x_{3}\ldots) = 0.(a+k+1)(x_{1}+k+1)x_{2}x_{3}\ldots. $$

  \item If Ind$^{-}$($0. x_{1}x_{2}x_{3}\ldots$)=$2i-1$  for any  $i\geq2$, then
\begin{eqnarray*}
   &&  T^{-}(1. a x_{1}x_{2}x_{3}\ldots)\\
   &=& 0.(a+k+2)(x_{1}-1)(x_{2}+1)\ldots (x_{2i-3}-1)x_{2i-2}(x_{2i-1}+k+1)x_{2i}\ldots.
\end{eqnarray*}

  \item If Ind$^{-}$($0. x_{1}x_{2}x_{3}\ldots$)=$2i$  for any  $i\geq1$, then
\begin{eqnarray*}
   &&T^{-}(1. a(k+2)x_{1}x_{2}x_{3}\ldots)  \\
   &=& 0. (a+k+2)(x_{1}-1)(x_{2}+1)\ldots (x_{2i-2}+1)x_{2i-1}(x_{2i}-k-1)x_{2i+1}\ldots.
\end{eqnarray*}

  \item  If Ind$^{-}$($0. x_{1}x_{2}x_{3}\ldots$)=$\infty$, then
\begin{eqnarray*}
   &&T^{-}(1. a(k+2)x_{1}x_{2}x_{3}\ldots)  \\
   &=& 0. (a+k+2)(x_{1}-1)(x_{2}+1)(x_{3}-1)(x_{4}+1)\ldots.
\end{eqnarray*}

\end{itemize}

\medskip

\begin{lemma}
The set $\{0, 1, \ldots, 2k+1\}^{\ast}$ is closed under  multiplication  by $\beta$, more precisely,  for any $x\in(0, \frac{\beta-k}{\beta})$ which has a finite expansion $x=0. \varepsilon_{1}\ldots\varepsilon_{n}$, there exist $\eta_{1}\ldots\eta_{m}\in\{0, 1, \ldots, 2k+1\}^{\ast}$ such that $\beta x =0. \eta_{1}\ldots\eta_{m}.$
\end{lemma}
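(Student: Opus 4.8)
The plan is to push the integer part of $\beta x$ back into the fractional digits by means of the identity $1=0.(k+1)(k+1)$ coming from $\frac{k+1}{\beta}+\frac{k+1}{\beta^{2}}=1$, and then to absorb the carries this creates. Writing $x=0.\varepsilon_{1}\varepsilon_{2}\cdots\varepsilon_{n}$ we have
\[
\beta x=\varepsilon_{1}+0.\varepsilon_{2}\varepsilon_{3}\cdots\varepsilon_{n}.
\]
From the defining relation one gets $k+1<\beta<k+2$, hence $\beta-k\in(1,2)$, and also $\beta-k-1=\frac{k+1}{\beta}$. Since $x<\frac{\beta-k}{\beta}$ we have $\beta x<\beta-k<2$, and as $0.\varepsilon_{2}\cdots\varepsilon_{n}\ge 0$ this forces $\varepsilon_{1}\in\{0,1\}$. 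If $\varepsilon_{1}=0$ there is nothing to do, since then $\beta x=0.\varepsilon_{2}\cdots\varepsilon_{n}$ is already a finite expansion. So the whole content of the lemma is the case $\varepsilon_{1}=1$: I must show that $1+w$ has a finite expansion, where $w:=0.\varepsilon_{2}\cdots\varepsilon_{n}$ has a finite expansion and $0\le w=\beta x-1<\frac{k+1}{\beta}$.

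First I would replace the given expansion of $w$ by its standard greedy $\beta$-expansion, which uses the digit set $\{0,1,\dots,k+1\}$ since $\lceil\beta\rceil=k+2$. This expansion is \emph{finite}: $w$ has a finite expansion, so $w\in\mathbb{Z}[\beta^{-1}]$, and $\beta$ is a quadratic Pisot number (minimal polynomial $t^{2}-(k+1)t-(k+1)$, conjugate $-\frac{k+1}{\beta}\in(-1,0)$), so it enjoys the finiteness property; alternatively this can be read off from Lemmas \ref{n=fn} and \ref{fn}. Write $w=0.w_{1}w_{2}\cdots w_{q}$ with $w_{i}\in\{0,\dots,k+1\}$. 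Because the greedy residuals stay in $[0,1)$ and $\beta-k-1=\frac{k+1}{\beta}$, we record two facts: (i) $w<\frac{k+1}{\beta}$ implies $w_{1}\le k$; and (ii) if $w_{j}=k+1$, then the next residual is $<\frac{k+1}{\beta}$, so $w_{j+1}\le k$; in particular the block $(k+1)(k+1)$ never occurs.

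Now add $1=0.(k+1)(k+1)$ to get the formal string
\[
\beta x=1+w=0.(k+1+w_{1})(k+1+w_{2})\,w_{3}w_{4}\cdots w_{q},
\]
in which, by (i) and (ii), every entry lies in $\{0,\dots,2k+1\}$ except possibly the one in position $2$, and that entry equals $2k+2$ only when $w_{2}=k+1$ (whence $w_{3}\le k$). To repair an entry equal to $2k+2$ in some position $p$ I would use the shifted identity $\beta^{-p}=(k+1)\beta^{-(p+1)}+(k+1)\beta^{-(p+2)}$: drop that entry to $2k+1$ and add $k+1$ to the entries in positions $p+1$ and $p+2$. Property (ii) together with a parity count keeps this under control: such a repair is only ever triggered at an even position $2j$ (because $w_{2j}=k+1$, so $w_{2j+1}\le k$), the entry produced at the odd position $2j+1$ is then $\le k+(k+1)=2k+1$ and needs no repair, while the entry at $2j+2$ reaches $2k+2$ only if $w_{2j+2}=k+1$ (and then $w_{2j+3}\le k$ once more); hence every position is altered at most once and the repairs advance strictly from one even position to the next. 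Since $w_{1}\cdots w_{q}$ is a finite word, the cascade halts once it runs past the last nonzero $w_{i}$, and what remains is a finite legal $\beta$-expansion of $\beta x$, as required.

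The crux — and the main obstacle — is this last paragraph. A carry handled carelessly can snowball: clearing an excess of size $r$ at a position dumps $r(k+1)$ into the following two positions, and without admissibility of the expansion of $w$ the excesses can compound instead of staying of size $1$. This is exactly the behaviour organised by the index $\text{Ind}^{-}$ and the borrow map $T^{-}$ introduced above; indeed one may alternatively dispatch the case $\varepsilon_{1}=1$ by a single application of $T^{-}$ to $1.\varepsilon_{2}\varepsilon_{3}\cdots\varepsilon_{n}0^{\infty}$, whose index is finite because the trailing zeros eventually break the required alternation between $B$ and $S$. In that route the only additional wrinkle is the boundary value $\varepsilon_{2}=k$ (not in $S^{-}$), which must first be rewritten, e.g.\ via $1.k=0.(2k+1)(k+1)$, before $T^{-}$ becomes applicable.
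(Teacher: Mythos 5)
Your argument reaches the same conclusion by a genuinely different route. The paper splits into cases according to the leading digits (necessarily $\varepsilon_{1}\in\{0,1\}$ and, when $\varepsilon_{1}=1$, $\varepsilon_{2}\in S$), dispatches $\varepsilon_{2}\in S^{-}$ with the borrow formulas, and in the hard case $\varepsilon_{2}=k$ first classifies the tail against $1=0.((k+1)k)^{\infty}$; you instead normalize the fractional part $w=0.\varepsilon_{2}\ldots\varepsilon_{n}$ once and for all to its greedy expansion and absorb the single addition of $1=0.(k+1)(k+1)$ by a carry cascade. Your control of the cascade is correct: the admissibility facts (i) $w_{1}\le k$ and (ii) no factor $(k+1)(k+1)$ really do confine the overflow value $2k+2$ to even positions, each position is touched at most once, and the cascade dies on a finite word. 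What this buys is uniformity (no case split on $\varepsilon_{2}$), but at the price of one imported ingredient: the \emph{finiteness} of the greedy expansion of $w$. That every $w\in\mathbb{Z}[\beta^{-1}]\cap[0,1)$ has a finite greedy expansion is the property (F) of Frougny and Solomyak; it does hold here because $\beta$ is a root of $t^{2}-at-b$ with $a\ge b\ge 1$, but it is not a consequence of ``Pisot'' alone (it fails for the other quadratic Pisot family $t^{2}-at+b$), it is nowhere established in this paper, and it does not obviously follow from Lemmas \ref{n=fn} and \ref{fn} as you suggest. To keep the paper self-contained you would need to prove directly that $w$ admits \emph{some} finite expansion over $\{0,\dots,k+1\}$ with first digit at most $k$ and no factor $(k+1)(k+1)$; the later Lemma \ref{finite} supplies the digit range but not the last two properties, which your cascade genuinely needs. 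Finally, the closing aside does not quite work as stated: rewriting $1.k\varepsilon_{3}\ldots$ via $1.k=0.(2k+1)(k+1)$ shifts mass onto the position of $\varepsilon_{3}$ and produces the digit $k+1+\varepsilon_{3}$, which can equal $2k+2$; this is exactly why the case $\varepsilon_{2}=k$ costs the paper its longest subargument, so that remark should either be dropped or expanded into a real carry analysis.
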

\begin{proof}
Let  $x\in(0, \frac{\beta-k}{\beta})$ be a  number with a finite expansion, and   $0. \varepsilon_{1}\ldots\varepsilon_{n}$ be an expansion of $x$.  Then $0. \varepsilon_{1}\ldots\varepsilon_{n}<0. 1(k+1)$ since $\frac{\beta-k}{\beta}=0. 1(k+1). $

We need to find a finite expansion for  $\beta x$. We consider three cases according as the value of the first two  digits $\varepsilon_{1}\varepsilon_{2}.$
\addtocounter{case}{-4}
\begin{case} $\varepsilon_{1}=0$.
\end{case}
In this case, it is clearly that $\beta x=0.\varepsilon_{2}\ldots\varepsilon_{n}$.
\begin{case}\label{10}
$\varepsilon_{1}=1, \varepsilon_{2}\in S^{-}.$
\end{case}
 Recall that $0.(2k+1)(2k+1)(2k+1)\ldots=1.(k+1)$. In this case, the digits can take any value in $\{0, 1,2, \ldots, 2k+1\}$, since $$0.1\varepsilon_{2}(2k+1)(2k+1)\ldots(2k+1)<0.1(k+1).$$
 Putting Ind$^{-}$($\varepsilon_{3}\ldots\varepsilon_{n}$)=$s$,  we have that

\begin{itemize}
  \item If $s=1$, then $$\beta x=0.(\varepsilon_{2}+k+1)(\varepsilon_{3}+k+1)\varepsilon_{4}\varepsilon_{5}\ldots \varepsilon_{n};$$

  \item If $s=2i-1$  for some $i\geq2$,  then
\begin{eqnarray*}
   && \beta x \\
   &=& 0.(\varepsilon_{2}+k+2)(\varepsilon_{3}-1)(\varepsilon_{4}+1)\ldots (\varepsilon_{2i-1}-1)\varepsilon_{2i}(\varepsilon_{2i+1}+k+1)\varepsilon_{2i+2}\ldots \varepsilon_{n};
\end{eqnarray*}
  \item If $s=2i$ for some $i\geq 1$, then
\begin{eqnarray*}
   && \beta x \\
   &=& 0. (\varepsilon_{2}+k+2)(\varepsilon_{3}-1)(\varepsilon_{4}+1)\ldots (\varepsilon_{2i}+1)\varepsilon_{2i+1}(\varepsilon_{2i+2}-k-1)\varepsilon_{2i+3}\ldots \varepsilon_{n}.
\end{eqnarray*}

\end{itemize}

\begin{case}
$\varepsilon_{1}=1, \varepsilon_{2}=k. $
\end{case}
In this case, since $0.1k\varepsilon_{3}\varepsilon_{4}\ldots<0.1(k+1)$, we have the constraint that $$0.\varepsilon_{3}\varepsilon_{4}\ldots<1. $$

Since $1=0.(k+1)k(k+1)k(k+1)k\ldots, $ the expansion of $x$ is of one of the following forms:
\begin{enumerate}
  \item $x=0.1k((k+1)k)^{p}a\varepsilon_{2p+4}\ldots\varepsilon_{n}\text{ for }a\in S, $
  \item $x=0.1k((k+1)k)^{p}(k+1)b\varepsilon_{2p+5}\ldots\varepsilon_{n}$
  $\text{ for }b\in S^{-}, $

\end{enumerate}
where $p\in\mathbb{N}\cup\{0\}$. \

In the first subcase, we have $$\beta x=1. k((k+1)k)^{p}a\varepsilon_{2p+4}\ldots\varepsilon_{n}. $$
By $1.00=0.(k+1)(k+1)$, it follows that
 $$\beta x=0.(2k+1)\ldots(2k+1)(a+k+1)\varepsilon_{2p+4}\ldots\varepsilon_{n},$$
 which is in $\{0,1,\ldots,2k+1\}^{\ast}$.

 In the second subcase, we have
 \begin{eqnarray*}
  && \beta x \\
    &=& 1. k((k+1)k)^{p}(k+1)b \varepsilon_{2p+5}\ldots\varepsilon_{n} \\
    &=& 0.(2k+1)\ldots(2k+1)(2k+2)b \varepsilon_{2p+5}\ldots\varepsilon_{n}.
 \end{eqnarray*}
Writing $\varepsilon_{2p+5}\varepsilon_{2p+6}\ldots\varepsilon_{n}=x_{1}x_{2}\ldots x_{q}$ and Ind$^{-}$($x_{1}x_{2}\ldots x_{q}$)=$s$, we have that
\begin{itemize}
  \item If $s=1$, then $$\beta x=0.(2k+1)\ldots(2k+1)(2k+1)(b+k+1)(x_{1}+k+1)x_{2}\ldots x_{q};$$

  \item If $s=2i-1$  for some $i\geq2$,  then
\begin{eqnarray*}
   && \beta x \\
   &=& 0.(2k+1)\ldots(2k+1)(b+k+2)(x_{1}-1)(x_{2}+1)\ldots (x_{2i-3}-1)x_{2i-2}\\&&(x_{2i-1}+k+1)x_{2i}\ldots x_{q};
\end{eqnarray*}
  \item If $s=2i$ for some $i\geq 1$, then
\begin{eqnarray*}
   && \beta x \\
   &=& 0.(2k+1)\ldots(2k+1)(b+k+2)(x_{1}-1)(x_{2}+1)\ldots (x_{2i-2}+1)x_{2i-1}\\&&(x_{2i}-k-1)x_{2i+1}\ldots x_{q}.
\end{eqnarray*}

\end{itemize}

\end{proof}

Let $x=0. \varepsilon_{1}\ldots\varepsilon_{n}\in\{0, 1, \ldots, 2k+1\}^{\ast}$. If the condition that $\varepsilon_{i}\in S$ for $1\leq i\leq n-1 $ implies that $\varepsilon_{i+1}\in B$, which means the digit $k+1, k+2, \ldots, $ or $2k+1$ is  always separated by the digit 0, 1,$\ldots,$  or $k$, we say that  this sequence $0. \varepsilon_{1}\ldots\varepsilon_{n}$ is $B$-separated.

For   $x=0.\varepsilon_{1}\varepsilon_{2}\ldots\varepsilon_{n}$, we define
 $$l=\min\left\{1 \leq i\leq n-1: \varepsilon_{i}\varepsilon_{i+1}\in B^{2}\right\}$$ with  the convention that $\min\emptyset=\infty$.
 It is easy to see that $l=\infty$ when and only when the sequence $0.\varepsilon_{1}\varepsilon_{2}\ldots\varepsilon_{n}$ is $B$-separated.

  We then define an operator $C_{r}$ as follows:
\begin{eqnarray*}
   && C_{r}(0.\varepsilon_{1}\varepsilon_{2}\ldots\varepsilon_{n}) \\
   &=& \left\{
    \begin{array}{ll}
      0.\varepsilon_{1}\ldots\varepsilon_{l-2}(\varepsilon_{l-1}+1)(\varepsilon_{l}-(k+1))(\varepsilon_{l+1}-(k+1))\varepsilon_{l+2}\ldots\varepsilon_{n}, & \hbox{\text{ if }  $l < \infty$;} \\
      0.\varepsilon_{1}\varepsilon_{2}\ldots\varepsilon_{n}, & \hbox{\text{ if } $l=\infty$.}
    \end{array}
  \right.
\end{eqnarray*}
It is easy to see that the operator $C_{r}$ preserves the value. On the other hand when $l<\infty$,  the operator $C_{r}$  reduces by $2k+1$ the summation of all the digits in the expansion.  Thus there exists $k\in\mathbb{N}$ such that $$C_{r}^{k+1}(0.\varepsilon_{1}\varepsilon_{2}\ldots\varepsilon_{n})=C_{r}^{k}(0.\varepsilon_{1}\varepsilon_{2}\ldots\varepsilon_{n}).$$
Whence the sequence $C_{r}^{k}(0.\varepsilon_{1}\varepsilon_{2}\ldots\varepsilon_{n})$ is $B$-separated,  and then we define the map $T$ as
$$T(0.\varepsilon_{1}\varepsilon_{2}\ldots\varepsilon_{n})=C_{r}^{k}(0.\varepsilon_{1}\varepsilon_{2}\ldots\varepsilon_{n}), $$ for such $k$.

If $\varepsilon_{1}\varepsilon_{2}\in  B^{2}$, then $0.\varepsilon_{1}\varepsilon_{2}=1.(\varepsilon_{1}-k-1)(\varepsilon_{2}-k-1)$. Denote $T(0.\varepsilon_{1}\varepsilon_{2}\ldots\varepsilon_{n})=\eta_{0}.\eta_{1}\ldots\eta_{n}$, where  $\eta_{0}\in\{0, 1\}$ and $\eta_{i}\in\{0,\ldots,2k+1\}$ for $1\leq i\leq n$. Clearly,  the map $T$ satisfies the following properties:
\begin{itemize}
\item If  $\eta_{i}\in B_{-}$ for some $i$, then $\varepsilon_{i}=\eta_{i}$. In other words, there is no new occurrence of the digit $\varepsilon\in B_{-}$ in the process, and thus we have $$|\eta_{0}.\eta_{1}\ldots\eta_{n}|_{\varepsilon}\leq|0.\varepsilon_{1}\varepsilon_{2}\ldots\varepsilon_{n}|_{\varepsilon}, $$ where $|\cdot|_{\varepsilon}$ denotes the total number of occurrences of the digit $\varepsilon$ in the sequence.
\item The sequence $\eta_{0}.\eta_{1}\ldots\eta_{n}$, or equivalently, the sequence $0.\eta_{1}\ldots\eta_{n}$ is $B$-separated.
\end{itemize}
\begin{lemma}\label{finite}
For any $0. \varepsilon_{1}\ldots\varepsilon_{n}\in\{0, 1, \ldots, 2k+1\}^{\ast}$, there exists $\widetilde{\varepsilon_{0}}. \widetilde{\varepsilon_{1}}\ldots\widetilde{\varepsilon_{m}}$ such that $\widetilde{\varepsilon_{0}}. \widetilde{\varepsilon_{1}}\ldots\widetilde{\varepsilon_{m}}=0. \varepsilon_{1}\ldots\varepsilon_{n}$,  where $\widetilde{\varepsilon_{0}}\in\{0, 1\}$ and $\widetilde{\varepsilon_{i}}\in\{0, 1, \ldots, k+1\}$ for $i=1, \ldots, m$.
\end{lemma}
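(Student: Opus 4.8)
\textbf{Proof proposal for Lemma \ref{finite}.}

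The plan is to take an arbitrary finite word $0.\varepsilon_1\ldots\varepsilon_n$ over $\{0,1,\ldots,2k+1\}$ and repeatedly reduce the occurrences of ``large'' digits until all digits land in $\{0,1,\ldots,k+1\}$, picking up at most a single carry into the integer position. The first step is to apply the map $T$ constructed just before the statement: it preserves the value, produces a $B$-separated word $\eta_0.\eta_1\ldots\eta_n$ with $\eta_0\in\{0,1\}$, and — crucially — by the first listed property of $T$ it never creates a new digit in $B_-=\{k+2,\ldots,2k+1\}$, so $|\eta_0.\eta_1\ldots\eta_n|_\varepsilon\le|0.\varepsilon_1\ldots\varepsilon_n|_\varepsilon$ for every $\varepsilon\in B_-$. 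So after one application of $T$ we may assume the word is $B$-separated with at most one carried unit in front.

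Next I would kill the remaining digits in $B_-$ one at a time, from left to right, using the ``carry'' map $T^+$. Because the word is $B$-separated, whenever $\varepsilon_i\in B_-$ the digit $\varepsilon_{i-1}$ (if $i>1$) lies in $S$; if $i=1$ we are in the situation $0.b\ldots$ with $b\in B_-$ and $T^+$ applies directly. In general one should first split off the prefix: a digit $b\in B_-$ at position $i$ sits after a block that is already $B$-separated, and we apply $T^+$ (or its obvious shifted analogue) to the tail $0.bx_1x_2\ldots$ starting at that position, where $x_j=\varepsilon_{i+j}$. Inspecting the four cases of $T^+$ in the definition: the leading digit $b$ drops to $b-k-1$ or $b-k-2\in S\subseteq\{0,\ldots,k+1\}$, a single $+1$ may be added to an immediately following digit (so that digit, having been $\le 2k+1$ and in fact $\le k$ by $B$-separatedness, stays $\le k+1$), and all other changes are $\pm1$ alternations between positions that were already balanced. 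One checks, case by case, that $T^+$ strictly decreases $\sum_j \max(\varepsilon_j-(k+1),0)$ — equivalently the number of positions whose digit exceeds $k+1$ — while possibly adding one unit to the integer part; since the integer part is bounded (the value of $0.\varepsilon_1\ldots\varepsilon_n$ is at most $\tfrac{2k+1}{\beta-1}$, a fixed constant, so the integer digit stays $0$ or $1$ throughout — indeed after the first $T$ it is already in $\{0,1\}$ and $T^+$ carries at most one unit, so we must argue it never overflows past $1$), the process terminates. After finitely many steps no digit exceeds $k+1$, every digit is nonnegative (the $-1$'s are always applied to positions where $B$-separatedness forced a digit in $B$, hence $\ge k+1\ge 1$), and $\widetilde\varepsilon_0\in\{0,1\}$, giving the desired form $\widetilde\varepsilon_0.\widetilde\varepsilon_1\ldots\widetilde\varepsilon_m$ with $m\le n$ (padding with $0$'s if a carry extended the word to the left is not needed since carries go into the already-present integer slot, not into new fractional positions).

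The main obstacle I anticipate is the bookkeeping that guarantees no digit ever goes negative or exceeds $k+1$ at an intermediate stage, and that the integer digit never climbs above $1$. The negativity worry is handled by the structural invariant that every $-1$ in the formulas for $T^+$ is applied at a coordinate that the current $B$-separation places in $B$ (hence value $\ge k+1$), or more precisely at coordinates $x_{2j}$ in a run $x_1\in S,x_2\in B,x_3\in S,\ldots$; one has to verify this invariant is restored after each step. The upper-bound worry ($\le k+1$) is handled similarly: the only genuine increment that could push a digit up is the single ``$(x_1+1)$'' or ``$(x_{2i-1}+1)$'' term, and in each case $B$-separatedness (or the run structure defining $\mathrm{Ind}^+$) forces that particular digit to have been in $S$, so it becomes at most $k+1$. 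The integer-digit bound follows because the value $x$ is fixed and less than $\tfrac{2k+1}{\beta-1}=\beta-k+\text{(something)}$; more simply, after the initial application of $T$ the integer digit is in $\{0,1\}$, each subsequent $T^+$ adds at most one unit there, but it also simultaneously creates a word whose value is unchanged and at most $\tfrac{2k+1}{\beta-1}<2$, forcing the integer digit to stay $\le 1$. Assembling these invariants into a clean induction (say on $\sum_j\max(\varepsilon_j-(k+1),0)$, or on the number of digits lying in $B_-$, which by the property of $T$ and the above never increases) completes the proof.
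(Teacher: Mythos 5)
Your overall strategy --- first apply $T$ to make the word $B$-separated, then use carries to remove the digits in $B_{-}=\{k+2,\ldots,2k+1\}$ one at a time --- lives in the same circle of ideas as the paper's proof, but the termination argument you propose breaks down, and this is precisely the delicate point of the lemma. You claim that each application of $T^{+}$ strictly decreases $\sum_{j}\max(\varepsilon_{j}-(k+1),0)$, ``equivalently'' the number of positions whose digit exceeds $k+1$. This is false in the case $\mathrm{Ind}^{+}=2i$: there the formula replaces $x_{2i}\in S$ by $x_{2i}+k+1$, which can be as large as $2k+1$, so a brand-new digit in $B_{-}$ is created. Concretely, for $k=2$ (digits $0,\ldots,5$, $S=\{0,1,2\}$, $B_{-}=\{4,5\}$) the word $0.402$ is $B$-separated with leftmost $B_{-}$-digit in the first position, $\mathrm{Ind}^{+}(0.02\ldots)=2$, and $T^{+}(0.402)=1.005$: the single offending digit $4$ is traded for the single offending digit $5$, the count of digits exceeding $k+1$ is unchanged, and your sum increases from $1$ to $2$. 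So neither proposed monovariant decreases and the induction you sketch does not close.

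A second, related gap: $B$-separatedness is not preserved by $T^{+}$ (the new digit $x_{2i}+k+1\in B$ may sit next to an unchanged $x_{2i+1}\in B$), and the incremented digits $x_{2j-1}+1$ can equal $k+1$; a later carry landing on such a position produces $k+2\in B_{-}$, i.e.\ a new large digit strictly to the \emph{left} of the one just removed. Hence even the weaker monovariant ``the position of the leftmost digit of $B_{-}$ moves right'' fails without further argument. You flag exactly these invariants as ``the main obstacle'' but then assert they are handled without verifying them; since they are false as stated, this is a genuine gap rather than routine bookkeeping. The paper circumvents the problem by organizing the induction differently: it eliminates the digit \emph{values} from the top down (all occurrences of $2k+1$ first, then $2k$, and so on), always attacks the \emph{last} occurrence of the current value, and inducts on the length of the suffix following it, reapplying $T$ whenever a new $B^{2}$-block or a new occurrence of the value appears --- the point being that any newly created large digit lies inside a strictly shorter suffix, so the induction hypothesis applies. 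Some such reorganization, or a correctly verified and more elaborate monovariant, is needed to make your argument complete.
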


\begin{proof}
Applying the map $T$ on the expansion $0. \varepsilon_{1}\ldots\varepsilon_{n}$ if necessary, we can suppose without loss of generality that $0. \varepsilon_{1}\ldots\varepsilon_{n}$ is $B$-separated. Our aim is to eliminate all the digits $k+2, \ldots, 2k+1$ from the expansion.

Now we want to eliminate the digit $2k+1$ in the first step.  We regroup the expansion $0. \varepsilon_{1}\ldots\varepsilon_{n}$ according as the position of the last $2k+1$ as follows:
\begin{equation}\label{eta}
0. \varepsilon_{1}\ldots\varepsilon_{n}=0. \varepsilon_{1}\ldots\varepsilon_{m}(2k+1) \delta_{1}\ldots\delta_{p},
\end{equation}
where  $m+k+1=n$ and  $\delta=\delta_{1}\ldots\delta_{p}\in\{0, 1, \ldots, 2k\}^{\ast}$. Moreover, we have that   $\varepsilon_{m}, \delta_{1}\in S$ from  the $B$-separation property.

We claim that we  can eliminate the last  occurrence of the digit $2k+1$ from the expansion without change of the value. Meanwhile,  in the process  there are no new occurrences of $2k+1$ in the resulted expansion. We will show the claim by induction on the length of $\delta$.

When $ \delta=\emptyset$, that is, $m+1=n$, recalling the fact  that $0.(2k+1)=1.(k-1)0(k+1)$, we have
$$0. \varepsilon_{1}\ldots\varepsilon_{n}=0. \varepsilon_{1}\ldots\varepsilon_{n-2}(\varepsilon_{n-1}+1)(k-1)0(k+1). $$
When the length of $\delta$ is 1,  we have
$$0. \varepsilon_{1}\ldots\varepsilon_{n}=0. \varepsilon_{1}\ldots\varepsilon_{m-1}(\varepsilon_{m}+1)(k-1)\delta_{1}(k+2). $$
Now by induction, we assume that the conclusion holds for the expansion with   the length of $\delta$  less than $p$.
When the length of $\delta$ is  $p$, 
according to the value of the digit $\delta_{2}$,  we consider the following three cases:

\addtocounter{case}{-3}
\begin{case}$\delta_{2}\in S^{-}$. \end{case}
In this case, by $0.(k+2)=1.00(k+1)$, we have
$$0. \varepsilon_{1}\ldots\varepsilon_{n}=0. \varepsilon_{1}\ldots\varepsilon_{m-1}(\varepsilon_{m}+1)(k-1)\delta_{1}(\delta_{2}+k+1)\delta_{3}\ldots\delta_{p}. $$
Thus the conclusion follows.
\begin{case}$\delta_{2}=k. $\end{case}
By $0.(k+2)=1.00(k+1)$, we have
$$0. \varepsilon_{1}\ldots\varepsilon_{n}=0. \varepsilon_{1}\ldots\varepsilon_{m-1}(\varepsilon_{m}+1)(k-1)\delta_{1}(2k+1)\delta_{3}\ldots\delta_{p}. $$
Applying  the map $T$ on the above expansion,  we have the conclusion by the hypothesis of induction.

\begin{case}$\delta_{2}\in B^{-}. $\end{case}
Recalling  the fact that the sequence $0. \varepsilon_{1}\ldots\varepsilon_{n}$  is $B$-separated, the block $\delta$ can be regrouped   as
\begin{equation}
\delta_{1}a_{1}\delta_{3}a_{2}\delta_{5}a_{2}\delta_{7}\ldots a_{t}\delta_{2t+1}\delta_{2t+2} \ldots \delta_{p},
\end{equation}
where $a_{j}\in B^{-}$ for $j\in\{1, 2, \ldots, t\}$,  and when $p \geq 2t+2$, $\delta_{2t+2}\in S$.

By $0.(k+2)=1.00(k+1)$, we obtain that
\begin{eqnarray*}
   && 0.(2k+1)\delta \\
   &=& 1.(k-1)(\delta_{1}+1)(a_{1}-1)\ldots (a_{t}-1)\delta_{2t+1}(\delta_{2t+2}+k+1)\delta_{2t+3} \ldots \delta_{p}.
\end{eqnarray*}

All the digits in the latter expansion are in $\{0, 1, \ldots, 2k\}$ expect at most that $\delta_{2t+2}+k+1\in\{0,1,\ldots,2k+1\}$. If $\delta_{2t+2}+k+1<2k+1$, we are done; otherwise  we apply the map $T$, and then  use the induction hypothesis.

 The claim then follows.

Up to now, we have already eliminate the last digit $2k+1$ in the expansion without new occurrence of $2k+1$ , and then we continue this process to eliminate all the digits $2k+1$ from the expansion.

Using the same argument, we eliminate the  other digit in $\{k+2, \ldots, 2k\}$.

\end{proof}

From the compactness of the symbol space $\{0, 1, \ldots, k+1\}^{\infty}$ ,we remark that any infinite sequence in $\{0, 1, \ldots, 2k+1\}^{\infty}$ has the property as in Lemma \ref{finite}, too.

\begin{remark}\label{infinite}
For any $0. \varepsilon_{1}\varepsilon_{2}\ldots\in\{0, 1, \ldots, 2k+1\}^{\infty}$, there exists $\widetilde{\varepsilon_{0}}. \widetilde{\varepsilon_{1}}\widetilde{\varepsilon_{2}}\ldots$ such that
$$0. \varepsilon_{1}\varepsilon_{2}\ldots=\widetilde{\varepsilon_{0}}. \widetilde{\varepsilon_{1}}\widetilde{\varepsilon_{2}}\ldots, $$
where $\widetilde{\varepsilon_{i}}\in\{0, 1, \ldots, k+1\}$ for $i\geq1$ and $\widetilde{\varepsilon_{0}}\in\{0,1\}$.
\end{remark}

\begin{lemma}\label{addition}
The set $\{0, 1, \ldots, 2k+1\}^{\ast}$ is closed under addition, more precisely, for any two finite expansions $\xi, \eta$ in $\{0, 1, \ldots, 2k+1\}^{\ast},$
there exists a sequence $\delta_{0}. \delta_{1}\delta_{2}\ldots\delta_{s}$ such that  $$\xi+\eta=\delta_{0}. \delta_{1}\delta_{2}\ldots\delta_{s},$$ where $0. \delta_{1}\delta_{2}\ldots\delta_{s}$ is in $\{0, 1, \ldots, 2k+1\}^{\ast}. $
\end{lemma}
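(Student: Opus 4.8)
The plan is to reduce the addition of two finite words to a sequence of elementary ``carry'' operations, exactly mirroring the proof that $\{0,1,\ldots,2k+1\}^{\ast}$ is closed under multiplication by $\beta$, but now exploiting stability under the shift of the ruler instead. First I would normalize both inputs using Lemma~\ref{finite}: write $\xi=\widetilde{\xi_0}.\widetilde{\xi_1}\ldots\widetilde{\xi_N}$ and $\eta=\widetilde{\eta_0}.\widetilde{\eta_1}\ldots\widetilde{\eta_N}$ with all fractional digits in $\{0,1,\ldots,k+1\}$ and integer parts in $\{0,1\}$, padding with trailing zeros so that the two words have equal length $N$. Then the ``naive'' termwise sum gives a word $\zeta=c.\zeta_1\ldots\zeta_N$ whose integer part $c=\widetilde{\xi_0}+\widetilde{\eta_0}\in\{0,1,2\}$ and whose fractional digits $\zeta_i=\widetilde{\xi_i}+\widetilde{\eta_i}\in\{0,1,\ldots,2k+2\}$, and $\zeta$ has the correct value $\xi+\eta$. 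The task is to massage $\zeta$ into a word over $\{0,1,\ldots,2k+1\}^{\ast}$ (the integer part being allowed to grow).

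The key mechanism is to eliminate the ``overflow'' digit $2k+2$ whenever it appears, and this is where I would invoke the identity $1.00=0.(k+1)(k+1)$ already used throughout Section~2. If $\zeta_i=2k+2$ for some $i$, then locally $\zeta_{i-1}(2k+2)=(\zeta_{i-1}+1)(k+1)(k+1)\cdots$—more precisely, carrying one unit of $\beta^{-(i-1)}$ forward via $\beta^{-(i-1)}=(k+1)\beta^{-i}+(k+1)\beta^{-(i-1)-1}$ reduces $\zeta_i$ by $k+1$ and leaves $\zeta_i=k+1\le 2k+1$, at the cost of increasing $\zeta_{i-1}$ by $1$ (if $i\ge 2$) or the integer part by $1$ (if $i=1$), and increasing $\zeta_{i+1}$ by $k+1$. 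The difficulty—and the reason this is not quite one line—is that increasing $\zeta_{i+1}$ by $k+1$ may create a new $2k+2$ further right, so a single pass does not obviously terminate. To handle this I would process the digits from left to right, or alternatively run a termination argument: each carry operation strictly increases the value of the integer part plus the ``weighted prefix'' $\sum_i \zeta_i\,2^{-i}$ while keeping the number of digits bounded by $N$ plus a controlled amount, hence only finitely many carries are possible before all fractional digits lie in $\{0,1,\ldots,2k+1\}$; this is the same style of finiteness argument used for the operator $C_r$ and the map $T$ earlier. After finitely many steps we obtain $\delta_0.\delta_1\ldots\delta_s$ with $\delta_0\in\mathbb{N}\cup\{0\}$, each $\delta_i\in\{0,1,\ldots,2k+1\}$, and the same value $\xi+\eta$.

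The main obstacle is precisely guaranteeing termination of the carry process, since a carry into position $i+1$ can cascade rightward. The cleanest remedy I anticipate is to bound everything a priori: after normalization each $\zeta_i\le 2k+2$, and a carry at position $i$ only ever adds $k+1$ to position $i+1$, so if we always carry at the \emph{leftmost} bad position, the region to the left stabilizes permanently (its digits never again exceed $2k+1$ once cleaned, because incoming carries of $+1$ applied to a digit $\le k+1$ stay $\le k+2\le 2k+1$ when $k\ge 1$—one must check the small arithmetic here, possibly re-normalizing a digit that hits $k+2$). Then an induction on the position of the leftmost digit exceeding $2k+1$, together with the bound that the word length grows by at most one per carry and carries propagate by at most one position, forces the procedure to halt. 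An alternative, slicker route is to note that $\xi+\eta\in I_{\beta,m}$ whenever it is $\le \frac{m}{\beta-1}$ plus an integer, apply Remark~\ref{infinite} to the value $\{\xi+\eta\}$ to get a $\{0,\ldots,k+1\}$-expansion, observe it must be eventually $0$ because $\xi+\eta$ is a finite $\beta$-combination with denominators powers of $\beta$, and then absorb the integer part; I would present whichever of these the referee finds cleaner, but I expect the explicit leftmost-carry argument to be the one actually written out.
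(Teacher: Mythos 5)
Your setup (normalize both summands via Lemma~\ref{finite} to digits in $\{0,1,\ldots,k+1\}$, then add digitwise to get digits in $\{0,1,\ldots,2k+2\}$) is exactly the paper's, but the carry step you build on it contains a genuine error. The identity $1.00=0.(k+1)(k+1)$ is a relation involving \emph{two consecutive} fractional positions: trading one unit at position $i-1$ requires adding or removing $k+1$ at positions $i$ and $i+1$ \emph{simultaneously}. Your local rewrite $\zeta_{i-1}(2k+2)=(\zeta_{i-1}+1)(k+1)(k+1)$ does not preserve the value (the right-hand side exceeds the left by $2(k+1)\beta^{-(i+1)}$), and the operation you then describe --- decrease $\zeta_i$ by $k+1$, increase $\zeta_{i-1}$ by $1$, and \emph{increase} $\zeta_{i+1}$ by $k+1$ --- changes the value by the same amount. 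The value-preserving move is $\zeta_{i-1}\,\zeta_{i}\,\zeta_{i+1}\mapsto(\zeta_{i-1}+1)(\zeta_{i}-k-1)(\zeta_{i+1}-k-1)$ (the operator $C_r$), which is only available when $\zeta_{i+1}\ge k+1$ as well. When an overflow digit $2k+2$ is followed by a digit $\le k$, no single local carry exists; resolving that configuration is precisely the nontrivial bookkeeping for which the paper introduces Ind$^{\pm}$ and the maps $T^{\pm}$. Since your leftmost-carry termination argument is reasoned from the incorrect local rule, it does not address this case. Your fallback via Remark~\ref{infinite} also has a gap: that remark yields an \emph{infinite} expansion with digits in $\{0,\ldots,k+1\}$, and the assertion that it ``must be eventually $0$ because $\xi+\eta$ is a finite $\beta$-combination'' is an unproved finiteness property, not an observation.

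The paper avoids carries altogether. With $z_j=\widetilde{\xi_j}+\widetilde{\eta_j}\in\{0,\ldots,2k+2\}$, it splits $z_j=x_j+y_j$ where $x_j=2k+1$, $y_j=1$ if $z_j=2k+2$ and $x_j=z_j$, $y_j=0$ otherwise; thus $0.x_1\ldots x_n\in\{0,\ldots,2k+1\}^{\ast}$ and $0.y_1\ldots y_n\in\{0,1\}^{\ast}$. Applying Lemma~\ref{finite} once more to $0.x_1\ldots x_n$ renormalizes it to digits in $\{0,\ldots,k+1\}$, and adding the $\{0,1\}$-word back digitwise produces digits at most $k+2\le 2k+1$ (using $k\ge 1$), which is already an admissible word. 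I recommend you replace the carry algorithm with this split-and-renormalize argument, or else carry out the two-digit carry analysis in full, including the case where the overflow is followed by a small digit.
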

\begin{proof}
Adding    0's at the end of  the expansion if necessary, we cam assume  without loss of generality that  the length of  $\xi$ and $\eta$ are  equal.

By Lemma \ref{finite}, we can suppose without loss of generality that
$x=\xi_{0}. \xi_{1}\ldots\xi_{n}$, $y=\eta_{0}. \eta_{1}\ldots\eta_{n}$, where $0.\xi_{1}\ldots\xi_{n}$ and $0. \eta_{1}\ldots\eta_{n}$ are in $\{0, 1, \ldots, k+1\}^{\ast}$.
Then
 $$\xi+\eta=z_{0}. z_{1}z_{2}\ldots z_{n}, $$
where
$z_{j}=\xi_{j}+\eta_{j}\in\{0, 1, 2, \ldots, 2k+2\}$ for $j=0,1,2,3,\ldots, n$.

Now we define two new  sequences as follows.
For any $j\in\{1,2, \ldots, n\}$, define

$$x_{j}=\left\{
  \begin{array}{ll}
    2k+1, & \hbox{ if $z_{j}=2k+2$;} \\
    z_{j}, & \hbox{ \text{others} .}
  \end{array}
\right. \text{and }
y_{j}=\left\{
  \begin{array}{ll}
    1, & \hbox{ if $z_{j}=2k+2$;} \\
    0, & \hbox{ \text{others} .}
  \end{array}
\right.$$

Obviously, we have $z_{j}=x_{j}+y_{j}$ for $j=1,2,3,\ldots, n$
and $0.y_{1}\ldots y_{n}\in\{0, 1\}^{\ast}$,   $0.x_{1}\ldots x_{n}\in\{0, 1, \ldots, 2k+1\}^{\ast}.$

By the   Lemma $\ref{finite}$ again, there is a sequence $\widetilde{x_{0}}. \widetilde{x_{1}}\ldots\widetilde{x_{q}}$ such  that
$$0.x_{1}\ldots x_{n}=\widetilde{x_{0}}. \widetilde{x_{1}}\ldots\widetilde{x_{q}},$$
where $0. \widetilde{x_{1}}\ldots\widetilde{x_{q}}\in\{0, 1, \ldots, k+1\}^{\ast}$. Hence, it follows   $$\xi+\eta=(\xi_{0}+\eta_{0}+\widetilde{x_{0}}).(\widetilde{x_{1}}+y_{1})\ldots(\widetilde{x_{q}}+y_{q})\quad (q>n),$$
where  $0.(\widetilde{x_{1}}+y_{1})\ldots(\widetilde{x_{q}}+y_{q})\in\{0, 1, \ldots, k+2\}^{\ast}.$
\end{proof}

\begin{lemma}$\label{dividingk}$
The set $\{0, 1, \ldots, 2k+1\}^{\ast}$ is closed under dividing by  $k+1$.
\end{lemma}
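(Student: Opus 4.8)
The plan is to turn ``divide by $k+1$'' into ``shift twice and add once''. Dividing the defining relation $\frac{k+1}{\beta}+\frac{k+1}{\beta^{2}}=1$ by $k+1$ gives the key identity
\[
\frac{1}{k+1}=\frac{1}{\beta}+\frac{1}{\beta^{2}},
\]
so for any finite expansion $x=0.\varepsilon_{1}\ldots\varepsilon_{n}\in\{0,1,\ldots,2k+1\}^{\ast}$ one has
\[
\frac{x}{k+1}=\frac{x}{\beta}+\frac{x}{\beta^{2}}=0.0\varepsilon_{1}\ldots\varepsilon_{n}+0.00\varepsilon_{1}\ldots\varepsilon_{n}.
\]
Each summand on the right is simply the word $\varepsilon_{1}\ldots\varepsilon_{n}$ preceded by one, respectively two, zeros, hence both belong to $\{0,1,\ldots,2k+1\}^{\ast}$.

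Next I would apply Lemma \ref{addition} to these two words: their sum equals $\delta_{0}.\delta_{1}\ldots\delta_{s}$ for some $\delta_{0}\in\mathbb{N}\cup\{0\}$ with $0.\delta_{1}\ldots\delta_{s}\in\{0,1,\ldots,2k+1\}^{\ast}$. It only remains to check $\delta_{0}=0$. Since all digits are nonnegative, $0.\delta_{1}\ldots\delta_{s}\ge 0$, so $\delta_{0}\le\frac{x}{k+1}$; and $x\le\frac{m}{\beta-1}=\beta-k$ gives $\frac{x}{k+1}\le\frac{\beta-k}{k+1}$, which is $<1$ because $\beta<2k+1$ (indeed, substituting $t=2k+1$ into $t^{2}-(k+1)t-(k+1)$ yields $2k^{2}-1>0$, so $2k+1$ lies to the right of the positive root $\beta$). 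Hence $\delta_{0}=0$ and $\frac{x}{k+1}=0.\delta_{1}\ldots\delta_{s}\in\{0,1,\ldots,2k+1\}^{\ast}$, as required; the case $x=0$ is trivial.

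The only step carrying genuine content is the identity $\frac{1}{k+1}=\frac{1}{\beta}+\frac{1}{\beta^{2}}$, which reduces the whole statement to Lemma \ref{addition}; I do not anticipate a real obstacle. The one point deserving a little care is the bookkeeping of the integer part, which the crude estimate $\frac{x}{k+1}<1$ disposes of (alternatively one could absorb a leading $1$ via $1.00=0.(k+1)(k+1)$, but the size bound is cleaner).
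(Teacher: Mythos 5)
Your proof is correct, but it takes a genuinely different route from the paper's. You reduce division by $k+1$ to the identity $\frac{1}{k+1}=\frac{1}{\beta}+\frac{1}{\beta^{2}}$, write $\frac{x}{k+1}$ as the sum of two shifted copies of the given word, and invoke Lemma~\ref{addition}; the check that the integer part vanishes via $\frac{x}{k+1}\le\frac{\beta-k}{k+1}<1$ (equivalently $\beta<2k+1$, i.e.\ $2k^{2}-1>0$) is sound, and there is no circularity since Lemma~\ref{addition} precedes Lemma~\ref{dividingk} and does not depend on it. The paper instead gives a self-contained digit-level construction: each digit $\varepsilon_{j}$ is split as $i(\varepsilon_{j})+t(\varepsilon_{j})/(k+1)$ with $i(\varepsilon)\in\{0,k+1\}$ and $t(\varepsilon)$ a multiple of $k+1$ bounded by $k(k+1)$, and the relation $\frac{1}{\beta^{j}}=\frac{k+1}{\beta^{j+1}}+\frac{k+1}{\beta^{j+2}}$ is applied to the $t$-part only, producing an equivalent expansion $0.\eta_{1}\ldots\eta_{n+2}$ all of whose digits lie in $\{0,k+1,2(k+1),\ldots,(2k+1)(k+1)\}$, which can then be divided digit by digit. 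Both arguments rest on the same algebraic identity; yours is shorter and delegates all carrying to the heavier machinery of Lemmas~\ref{finite} and~\ref{addition}, while the paper's is explicit and keeps the digits under control directly --- this is exactly why it splits off $i(\varepsilon)$ rather than applying the identity to the whole digit, since doing the latter naively would produce digits as large as $2(k+1)(2k+1)$, which do not divide down into $\{0,1,\ldots,2k+1\}$.
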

\begin{proof}
For each $\varepsilon\in\{0, 1, \ldots, 2k+1\}$, we define

$$i(\varepsilon)=\left\{
  \begin{array}{ll}
    0, & \hbox{if $\varepsilon\in S$;} \\
    k+1, & \hbox{if $\varepsilon\in B$.}
  \end{array}
\right.
\text{ and }
t(\varepsilon)=\left\{
  \begin{array}{ll}
    (k+1)\varepsilon, & \hbox{if $\varepsilon\in S$;} \\
    (k+1)(\varepsilon-k-1), & \hbox{if $\varepsilon\in B$.}
  \end{array}
\right.
$$

Let  $x=0. \varepsilon_{1}\ldots\varepsilon_{n}\in\{0, 1, \ldots, 2k+1\}^{\ast}$. Define $0.\eta_{1}\ldots\eta_{n+2}$ as
$$\eta_{j}=i(\varepsilon_{j})+t(\varepsilon_{j-1})+t(\varepsilon_{j-2})\quad(j=1, 2,\ldots, n+2),$$
with  $\varepsilon_{-1}=\varepsilon_{0}=0. $ Clearly, the sequence $0.\eta_{1}\ldots\eta_{n+2}$ has the following two properties :
\begin{itemize}
  \item  $\eta_{j}\in\{0, k+1, 2(k+1), \ldots, (2k+1)(k+1)\}$,
  \item $0.\eta_{1}\ldots\eta_{n+2}= 0. \varepsilon_{1}\ldots\varepsilon_{n}$.
\end{itemize}

Thus, we obtain
 $$\frac{x}{k+1}=0.\frac{\eta_{1}}{k+1}\ldots\frac{\eta_{n+2}}{k+1}\in\{0, 1, \ldots, 2k+1\}^{\ast}.$$
\end{proof}
\begin{corollary}\label{2n}
For any $n\in\mathbb{N}\cup\{0\}$, $\frac{1}{(k+1)^{n}}$  has at least a finite expansion.
\end{corollary}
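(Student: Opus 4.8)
The plan is a short induction on $n$, using the two closure-type results established just above. For the base case $n=0$ we have $\frac{1}{(k+1)^{0}}=1$, and Lemma \ref{1}---more precisely the identity $1.00=0.(k+1)(k+1)$ recalled at the start of its proof---exhibits the finite expansion $1=0.(k+1)(k+1)$. Since $k+1\le 2k+1$, this is a legitimate element of $\{0,1,\ldots,2k+1\}^{\ast}$, so $1$ has at least a finite expansion.

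For the inductive step, assume that $\frac{1}{(k+1)^{n}}$ has a finite expansion, i.e.\ $\frac{1}{(k+1)^{n}}\in\{0,1,\ldots,2k+1\}^{\ast}$. Writing $\frac{1}{(k+1)^{n+1}}=\frac{1}{k+1}\cdot\frac{1}{(k+1)^{n}}$ and invoking Lemma \ref{dividingk}, which states that $\{0,1,\ldots,2k+1\}^{\ast}$ is closed under division by $k+1$, we immediately obtain that $\frac{1}{(k+1)^{n+1}}$ also has a finite expansion. This closes the induction and proves the corollary.

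There is essentially no obstacle here: the statement is a direct consequence of Lemmas \ref{1} and \ref{dividingk}. The only points worth a moment's care are (i) checking that the expansion of $1$ produced by Lemma \ref{1} uses only digits lying in $\{0,1,\ldots,2k+1\}$---it does, since its digits are $k$, $k+1$, and $2k+1$---and (ii) reading $\frac{1}{(k+1)^{n}}$ as a numerical value that possesses \emph{some} finite expansion, so that Lemma \ref{dividingk}, phrased for finite expansions in the given alphabet, applies verbatim at each stage of the induction.
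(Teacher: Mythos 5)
Your proof is correct and follows exactly the paper's route: the paper likewise derives the corollary directly from Lemma \ref{1} (which supplies the finite expansion $1=0.(k+1)(k+1)$) and Lemma \ref{dividingk}, and your explicit induction on $n$ merely spells out what the paper leaves implicit.
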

\begin{proof}
This is a direct result of Lemma \ref{1} and   $\ref{dividingk}$.

\end{proof}
\section {proof of theorem \ref{theorem}}

We are now in a position to proof Theorem \ref{theorem}. Recalling that
$$\mathcal{S}=\{\frac{p\beta+q}{(k+1)^{n}}\in(0, \beta-k): n, p, q\in\mathbb{Z}\}.$$

\begin{lemma}\label{xs}
Let  $x \in (0, \beta-k)$. Then $x$ has a finite expansion if and only if $x\in\mathcal{S}$.
\end{lemma}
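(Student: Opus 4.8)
The plan is to prove the two implications separately, using the closure properties of $\{0,1,\ldots,2k+1\}^{\ast}$ established in the preliminaries.

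First I would handle the ``if'' direction: suppose $x=\frac{p\beta+q}{(k+1)^{n}}\in(0,\beta-k)$ with $n,p,q\in\mathbb{Z}$. By Lemma \ref{n=fn} every nonnegative integer can be written as $\sum n_iF_i$ with digits in $\{0,\ldots,k+1\}$, and by Lemma \ref{fn} we have $F_i\beta=F_{i+1}-(-\tfrac{k+1}{\beta})^i$; combining these, $p\beta$ (for $p\geq0$) can be expressed in terms of powers $\beta^{-j}$, i.e.\ as a finite expansion possibly after the point, modulo handling signs. Likewise $q$ and the integer part contribute finite expansions. Using Corollary \ref{2n}, $\frac{1}{(k+1)^n}$ has a finite expansion, and by Lemma \ref{addition} (closure under addition) and Lemma \ref{dividingk} (closure under division by $k+1$, applied $n$ times) the whole combination $\frac{p\beta+q}{(k+1)^n}$ has a finite expansion. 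The negative values of $p$ or $q$ require a little care: one should rewrite things so that only nonnegative digit-sequences appear, using the identity $1.00=0.(k+1)(k+1)$ and the borrow map $T^-$ to absorb subtractions, or simply observe that since $x\in(0,\beta-k)$ is positive, after forming the relevant integer combination one stays inside $I_{\beta,m}$ and the closure lemmas apply to the positive representative. This direction is essentially bookkeeping with the already-proven closure properties.

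For the ``only if'' direction, suppose $x$ has a finite expansion $x=0.\varepsilon_1\cdots\varepsilon_n$ with $\varepsilon_i\in\{0,\ldots,2k+1\}$. Then $x=\sum_{i=1}^n \varepsilon_i\beta^{-i}$. I would multiply through by $\beta^n$: since $\beta$ satisfies $\beta^2=(k+1)\beta+(k+1)$, every power $\beta^j$ lies in $\mathbb{Z}+\mathbb{Z}\beta$, so $\beta^n x=\sum \varepsilon_i\beta^{n-i}=a+b\beta$ for some $a,b\in\mathbb{Z}$. Hence $x=\frac{a+b\beta}{\beta^n}$. It remains to convert the denominator $\beta^n$ into a power of $k+1$: from $\beta^2=(k+1)(\beta+1)$ one gets $\beta^n=(k+1)^{?}\cdot(\text{unit-like factor})$—more precisely, iterating the relation, $\beta^{2m}$ is $(k+1)^m$ times a polynomial in $\beta$ with integer coefficients, and $\beta^{2m+1}$ similarly. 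So multiplying numerator and denominator appropriately, $x=\frac{p\beta+q}{(k+1)^N}$ for suitable integers $p,q,N$, i.e.\ $x\in\mathcal{S}$. Equivalently, since $\beta\cdot\beta=(k+1)\beta+(k+1)$ gives $\beta^{-1}=\frac{\beta-(k+1)}{k+1}\cdot\frac{1}{\beta}\cdots$; cleaner is to note $\frac{k+1}{\beta}+\frac{k+1}{\beta^2}=1$, so $\frac{1}{\beta^2}=\frac{1}{k+1}-\frac{1}{\beta}$, which lets one reduce any $\beta^{-n}$ to the form $\frac{c+d/\beta}{(k+1)^{n-1}}$ inductively, landing in $\mathcal{S}$.

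The main obstacle I anticipate is the ``if'' direction's sign handling: the closure lemmas (\ref{addition}, \ref{dividingk}) are stated for elements of $\{0,\ldots,2k+1\}^{\ast}$, i.e.\ for genuine nonnegative finite expansions, whereas $p$ or $q$ in the definition of $\mathcal{S}$ may be negative, so $p\beta+q$ is a priori only an element of $\mathbb{Z}+\mathbb{Z}\beta$, not manifestly expressible with nonnegative digits. The remedy is to first establish that \emph{every} element of $(\mathbb{Z}+\mathbb{Z}\beta)\cap(0,\infty)$ divided by a power of $k+1$, when it falls in $I_{\beta,m}$, has a finite expansion—this follows by writing $p\beta+q$ as a difference of two nonnegative expansions (using Lemma \ref{n=fn} and Lemma \ref{fn} to get a nonnegative finite expansion of $|p|\beta$ and of $|q|$, which lie in $\{0,\ldots,2k+1\}^{\ast}$ after applying Lemma \ref{finite}) and then realizing the subtraction via the borrow map $T^-$, noting that the subtraction is legitimate because the result is positive and bounded by $\frac{m}{\beta-1}$. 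Once one knows both $\frac{|p|\beta}{(k+1)^n}$ and $\frac{|q|}{(k+1)^n}$ have finite expansions and their signed combination is a positive number in $I_{\beta,m}$, subtracting a finite expansion from a larger finite expansion (both in $I_{\beta,m}$) yields a finite expansion—this is the one genuinely new computation, and I would phrase it as a small auxiliary lemma on subtraction mirroring Lemma \ref{addition}.
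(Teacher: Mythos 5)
Your proposal is correct and follows essentially the same route as the paper: the same algebraic identity $F_{n}\beta=F_{n+1}-(-\tfrac{k+1}{\beta})^{n}$ (equivalently $\beta^{-2}=\tfrac{1}{k+1}-\beta^{-1}$) converts finite expansions into elements of $\tfrac{\mathbb{Z}+\mathbb{Z}\beta}{(k+1)^{n}}$ for the forward direction, and the converse is assembled from Lemma \ref{n=fn}, Lemma \ref{fn}, Corollary \ref{2n} and the closure lemmas exactly as the paper does. The one issue you flag --- that the resulting combination of terms carries both signs, so closure under addition alone does not finish the argument and a subtraction step (realized via the borrow identity $1.00=0.(k+1)(k+1)$ and positivity of the result) is genuinely needed --- is real; the paper's own proof silently elides it, so your explicit auxiliary subtraction lemma is, if anything, a more honest account of what must be checked, though it would still need to be proved rather than merely stated.
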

\begin{proof}
Let $x\in(0, \beta-k)$ be a number with a finite expansion  $0. \varepsilon_{1}\ldots\varepsilon_{2n}$, i.e.,
$$x=\sum_{i=1}^{2n}\varepsilon_{i}\beta^{-i}, $$ where $\varepsilon_{i}\in\{0, 1, \ldots, 2k+1\}$ for $i=1, 2, \ldots, 2n $. By Lemma $\ref{fn}$, we have

\begin{eqnarray*}
  x &=& \sum_{i=1}^{n}\frac{\varepsilon_{2i-1}(F_{2i-1}\beta-F_{2i})}{(k+1)^{2i-1}}+\sum_{i=1}^{n}\frac{\varepsilon_{2i}(F_{2i+1}-F_{2i}\beta)}{(k+1)^{2i}} \\
   &=& \frac{1}{(k+1)^{2n}}[(\sum_{i=1}^{n}\varepsilon_{2i-1}F_{2i-1}(k+1)^{2n-(2i-1)}-\sum_{i=1}^{n}\varepsilon_{2i}F_{2i}(k+1)^{2n-2i})\beta \\ &+&(\sum_{i=1}^{n}\varepsilon_{2i}F_{2i+1}(k+1)^{2n-2i}-\sum_{i=1}^{n}\varepsilon_{2i-1}F_{2i}(k+1)^{2n-(2i-1)})]
\end{eqnarray*}

Putting  $$p=\sum_{i=1}^{n}\varepsilon_{2i-1}F_{2i-1}(k+1)^{2n-(2i-1)}-\sum_{i=1}^{n}\varepsilon_{2i}F_{2i}(k+1)^{2n-2i}$$ and $$q=\sum_{i=1}^{n}\varepsilon_{2i}F_{2i+1}(k+1)^{2n-2i}-\sum_{i=1}^{n}\varepsilon_{2i-1}F_{2i}(k+1)^{2n-(2i-1)}, $$ we show that, $x\in\mathcal{S}$.

On the other hand,  if $x\in\mathcal{S}$, then there exist $p, q, n\in\mathbb{Z}$ such that  $$x=\frac{p\beta+q}{(k+1)^{n}} $$
with  $x\in(0, \beta-1)$. Applying   Lemma \ref{n=fn} with  $p$, we have
\begin{equation}\label{p}
   p=\sum_{i=1}^{2p_{0}}n_{i}F_{i},
\end{equation}
where $n_{i}\in\{0, 1, \ldots, k+1\}$ for any $i\in\{1, \ldots, 2p_{0}\}$,  or  $n_{i}\in\{0, -1, \ldots, -(k+1)\}$ for any $i\in\{1, \ldots, 2p_{0}\}$.
From Lemma $\ref{fn}$ and (\ref{p}), we have
\begin{eqnarray*}
  x &=&\frac{1}{(k+1)^{n}}\left[\sum_{i=1}^{p_{0}}n_{2i-1}(F_{2i}+(\frac{k+1}{\beta})^{2i-1})+\sum_{i=1}^{p_{0}}n_{2i}(F_{2i+1}-(\frac{k+1}{\beta})^{2i})+q\right] \\
   &=& \sum_{i=1}^{p_{0}}\frac{n_{2i-1}}{\beta^{2i-1}(k+1)^{n-(2i-1)}}- \sum_{i=1}^{p_{0}}\frac{n_{2i}}{\beta^{2i}(k+1)^{n-2i}}+\frac{M}{(k+1)^{n}},
\end{eqnarray*}
where $M=\sum_{i=1}^{2p_{0}}n_{i}F_{i+1}+q$.
Hence, from Lemma \ref{addition} and Corollary \ref{2n}, we obtain  that  any $x\in\mathcal{S}$ has a finite expansion.
\end{proof}
\begin{remark}\label{finite-coun}
By Lemma \ref{1}, the number 1 has countably many expansions, and thus for any $x\in S$, we can deduce at least countably many expansions for $x$ from its finite expansion.
\end{remark}

\begin{lemma}\label{coun-finite}
If  $x\in(0, \beta-k)$ has countably many  expansions then $x$ at least has a finite expansion.
\end{lemma}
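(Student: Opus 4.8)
The plan is to prove the contrapositive: if $x \in (0, \beta-k)$ has no finite expansion, then $x$ has uncountably many expansions. The strategy is to locate, along \emph{some} expansion of $x$, infinitely many positions at which a genuine branching occurs — that is, positions where the tail value lands in an interval admitting at least two essentially different continuations — and such that the branchings can be chosen independently. Concretely, for a tail $x' = 0.\varepsilon_{n+1}\varepsilon_{n+2}\ldots$ one seeks a configuration (using the relation $1.00 = 0.(k+1)(k+1)$ and the carry/borrow maps $T^{+}, T^{-}$ already developed in the preliminaries) that produces two distinct legal digit strings with the same value; iterating this yields a full binary tree of expansions, hence $2^{\aleph_0}$ of them.

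The key steps, in order, are as follows. First I would fix one expansion $0.\varepsilon_1\varepsilon_2\ldots$ of $x$ and consider the sequence of tails $x_n := \beta^n\big(x - \sum_{i=1}^{n}\varepsilon_i\beta^{-i}\big) \in [0, \beta-k]$. Each $x_n$ lies in $I_{\beta,m}$, and since $x$ has no finite expansion, no $x_n$ equals $0$; I would argue moreover that we may assume each $x_n$ lies in the \emph{open} interval $(0,\beta-k)$ (an endpoint tail would force $x$ itself to be of the special form, contradicting the hypothesis via Lemma \ref{xs}, since the endpoint $\beta-k$ has the finite/eventually periodic expansion controlled by Lemma \ref{1}). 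Second, I would establish a dichotomy for a generic tail $x' \in (0,\beta-k)$: either the first digit of \emph{every} expansion of $x'$ is forced (a ``rigid'' step), or there are at least two admissible first digits, in which case $x'$ branches. Third — this is the crux — I would show that the rigid steps cannot persist forever unless $x$ has a finite expansion: a number all of whose tails are rigid has a unique expansion, and the uniqueness locus in this base, by Baker's theorem together with Sidorov's classification recalled in the introduction, reduces (modulo the special countable set) to points with eventually periodic, indeed eventually finite-type, expansions, which again lie in $\mathcal{S}$ by Lemma \ref{xs}. Hence infinitely many tails $x_{n_1} < x_{n_2} < \cdots$ branch. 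Fourth, at each branching tail $x_{n_j}$ I pick two distinct admissible prefixes leading back into $(0,\beta-k)$; making these choices independently over $j$ embeds $\{0,1\}^{\mathbb{N}}$ into $\mathcal{E}_{\beta,m}(x)$ injectively, giving uncountably many expansions.

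The main obstacle is the third step: ruling out an \emph{infinite} run of rigid (forced) digits for a non-finite $x$. The carry and borrow maps $T^{+}, T^{-}$ and the block normal form from Lemma \ref{finite} give the combinatorial tools — e.g. whenever a tail has first two digits in $S \times B$ or in $B\times S$ one can often swap via $0.(k+1)(k+1) = 1.00$ — but one must verify that the \emph{only} way to avoid an available swap indefinitely is to be on the short list of unique expansions from Lemma \ref{1} (the sequences $((k+1)k)^\infty$, $((k+1)k)^n(k+1)(k+1)$, etc., and their digitwise translates), all of which are finite or periodic and hence correspond to points of $\mathcal{S}$. Once that structural fact is in hand, the branching construction in the fourth step is routine, and Remark \ref{finite-coun} already supplies the converse inclusion needed to combine this lemma with Lemma \ref{xs} into the full characterization.
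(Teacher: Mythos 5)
The contrapositive framing and the reduction to tails in $(0,\beta-k)$ are reasonable, but your third step --- which you correctly identify as the crux --- contains a genuine gap. You propose to control the ``rigid'' (forced) tails by citing ``Baker's theorem together with Sidorov's classification,'' yet neither result applies here: Baker's theorem describes the regimes $\beta\in(1,\mathcal{G}(m))$ and $\beta\in(\mathcal{G}(m),m+1]$ and says nothing at the critical value $\beta=\mathcal{G}(m)$, which is exactly the case this paper treats (it is the open question being answered), while Sidorov's classification is only for $m=1$. So the characterization of the uniqueness locus that your argument hinges on is precisely what would have to be proved. Worse, your assertion that eventually periodic expansions ``again lie in $\mathcal{S}$'' is false in general: a periodic tail such as $0.(\varepsilon_1\varepsilon_2)^{\infty}$ has value with denominator $\beta^{2}-1=(k+1)\beta+k$, whose norm is $-(2k+1)$, so rationalizing yields denominators divisible by $2k+1$ (coprime to $k+1$), not powers of $k+1$; e.g.\ for $k=1$ one checks $0.(10)^{\infty}=1-1/\sqrt{3}\notin\mathcal{S}$. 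Only very special periodic words, such as $((k+1)k)^{\infty}=1.00$, land in $\mathcal{S}$. A second gap sits in your fourth step: the branch positions $n_{1}<n_{2}<\cdots$ are located along one fixed expansion, and once you deviate at $n_{j}$ the subsequent tail is a different point whose later branching you have not controlled; to embed $\{0,1\}^{\mathbb{N}}$ you would need every expansion of every non-$\mathcal{S}$ tail to branch infinitely often, which is again the heart of the matter.

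The paper avoids all of this with a direct combinatorial analysis of the digit string of a single infinite expansion. If blocks from $B^{2}$ or $S^{2}$ occur infinitely often, the local identities $0.xb_{1}b_{2}=0.(x+1)(b_{1}-k-1)(b_{2}-k-1)$ and $0.ya_{1}a_{2}=0.(y-1)(a_{1}+k+1)(a_{2}+k+1)$ give independent, disjointly supported swaps at infinitely many positions, hence $2^{\aleph_{0}}$ expansions --- unless the tail is $(2k+1)^{\infty}$ or $0^{\infty}$, in which case the expansion is finite. If such blocks occur only finitely often, the expansion is eventually alternating in $S\times B$, and an iterated renormalization (using $0.(k+2)=1.00(k+1)$) strictly shrinks the admissible digit ranges of the alternating pairs until the tail becomes $((k+1)k)^{\infty}=1.00$, again producing a finite expansion. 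I would recommend replacing your step 3 by this block analysis, or else proving the rigidity dichotomy from scratch rather than appealing to results that do not cover $\beta=\mathcal{G}(m)$.
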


\begin{proof}

Let
\begin{equation}\label{x}
    0. \varepsilon_{1}\varepsilon_{2}\varepsilon_{3}\ldots
\end{equation}
be an infinite expansion of $x$. We consider the 2-blocks appearing in the expansion.
\addtocounter{case}{-3}

\begin{case}\label{bb}
There are infinite many blocks in $B^2$ or  $S^2$ appearing in (\ref{x}).
\end{case}
\begin{enumerate}
  \item There are  infinite many  blocks of form  $xb_{1}b_{2}$ with $x\in\{0,1, \ldots, 2k\}$, $b_{1}, b_{2}\in B$, or $ya_{1}a_{2}$ with  $y\in\{1,2, \ldots, 2k+1\}$, $a_{1},a_2\in S. $

Notice that $$0.xb_{1}b_{2}=0.(x+1)(b_{1}-k-1)(b_{2}-k-1),  $$ and $$0.ya_{1}a_{2}=0.(y-1)(a_{1}+k+1)(a_{2}+k+1). $$ The number  $x$ has uncountably many  expansions.
  \item The expansion  ends with $(2k+1)^{\infty}$ or $0^{\infty}$.

Since $1=0.k(2k+1)^{\infty}$, the number  $x$ has a finite expansion.
\end{enumerate}

\begin{case}
The blocks in $B^2\cup S^2$ appears for finite times.
\end{case}
In this case, the expansion is of the type
\begin{equation}\label{x1}
x=0.\varepsilon_{1}\ldots\varepsilon_{n_{1}}a^{(1)}_{1}b^{(1)}_{1}a^{(1)}_{2}b^{(1)}_{2}a^{(1)}_{3}b^{(1)}_{3}\ldots,
\end{equation}
with  $a^{(1)}_{i}\in S$ and $b^{(1)}_{i}\in B$ for $i\geq 1$.

Put $l=\min\{i\geq1:  b_{i}\geq k+2 \}$  with the convention that $\min\emptyset=\infty. $ Thus  $l=\infty$ when and only when  $b_{i}=k+1$ for all $i\geq1$.

When $l=\infty$, we have
\begin{eqnarray*}
  x &=&  0.\varepsilon_{1}\ldots\varepsilon_{n_{1}}a^{(1)}_{1}(k+1)a^{(1)}_{2}(k+1)a^{(1)}_{3}(k+1)\ldots\\
   &=& 0.\varepsilon_{1}\ldots\varepsilon_{n_{1}}a^{(1)}_{1}k (a^{(1)}_{2}+k+1)(2k+1)(a^{(1)}_{3}+k+1)(2k+1)\ldots
\end{eqnarray*}

There are infinitely many blocks in $B^2$, whence just as in Case \ref {bb}, either $x$  has a finite expansion, or
 $x$ has uncountably many  expansions.

When $l<\infty$, we have,  by $0.(k+2)=1.00(k+1)$, that
\begin{eqnarray*}
  \mathcal{E}_{1} &=& 0.\varepsilon_{1}\ldots\varepsilon_{n_{1}}a^{(1)}_{1}b^{(1)}_{1}\ldots a^{(1)}_{l-1}b^{(1)}_{l-1}(a^{(1)}_{l}+1)0(a^{(1)}_{l+1}+1)(b^{(1)}_{l+1}-1)\ldots.
\end{eqnarray*}
We rewrite this expansion as
\begin{equation}\label{x2}
    \mathcal{E}_{2}= 0.\varepsilon_{1}\ldots\varepsilon_{n_{2}}a^{(2)}_{1}0a^{(2)}_{2}b^{(2)}_{2}a^{(2)}_{3}b^{(2)}_{3}\ldots,
\end{equation}
with  $$a^{(2)}_{i}\in S+1=\{x+1: x\in S\}=\{1, 2, \ldots, k+1\}$$  and  $$b^{(2)}_{i}\in B-1=\{y-1: y\in B\}=\{k, k+1, \ldots, 2k
\}$$.

Meanwhile, using the same argument as the Case\ref{bb}, we may assume that the blocks in $B^2\cup S^2$ appears for only finite times, and thus in the expansion (\ref{x2}), the subword $a^{(2)}_{i}b^{(2)}_{i}$ for $i$ large  enough
\begin{itemize}
  \item  can not in $\{1k, 2k, \ldots, kk, (k+1)(k+1), (k+1)(k+2), \ldots, (k+1)(2k+1)\}$,
  \item may not be equal to $(k+1)k$, expect  the case  when   $a^{(2)}_{i}b^{(2)}_{i}=(k+1)k$ eventually. Whence by $1.00=0.((k+1)k)^{\infty}$, the expansion can be transformed into a finite one.
\end{itemize}

In the light of these, we may suppose that the expansion of $x$ is of  the form:
\begin{equation}\label{x3}
    x= 0.\varepsilon_{1}\ldots\varepsilon_{n_{3}}a^{(3)}_{1}b^{(3)}_{1}a^{(3)}_{2}b^{(3)}_{2}a^{(3)}_{3}b^{(3)}_{3}\ldots,
\end{equation}
with $$a^{(3)}_{i}\in (S+1)^{-}=\{1, 2, \ldots, k\}$$  and  $$b^{(3)}_{i}\in (B-1)_{-}=\{k+1, \ldots, 2k
\}$$ for $i\geq 1$.

Up to now, we only need to consider the expansion (\ref{x3}). Compared with expansion (\ref{x1}), all $a_{i}b_{i}$ may take the values in $(S+1)^-\times(B-1)_{-}$ rather than $SB$. We continue this process to reach an expansion with all $a_{i}b_{i}\in (S+2)^-\times(B-2)_{-}$, and so  on. Finally, we obtain an expansion ending by $((k+1)k)^{\infty}, $ and it can be transformed into a finite one as before.

\end{proof}

\begin{lemma}\label{finite-coun}
Any number in $\mathcal{S}$  has countably many  expansions.
\end{lemma}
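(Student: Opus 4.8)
By Lemma~\ref{xs} every $x\in\mathcal{S}$ has at least one finite expansion, say $0.\varepsilon_1\ldots\varepsilon_N$, and by Remark~\ref{finite-coun} (the earlier one) substituting the countably many expansions of $1$ into any block equal to $(k+1)(k+1)$ already yields countably many expansions; so the whole content of the lemma is the \emph{upper} bound, namely that $x$ has \emph{no more} than countably many expansions. The plan is to show that every expansion of $x\in\mathcal{S}$ is obtained from a finite one by a ``bounded'' substitution procedure, and that there are only countably many outcomes. Concretely, I would argue that \emph{every} expansion of $x$ is eventually periodic of a very restricted shape — it must end in one of the tails $0^\infty$, $(2k+1)^\infty$, or $((k+1)k)^\infty$ — so that the set of expansions injects into (finite words) $\times$ (three tails), a countable set.

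The key steps, in order. First I would invoke the case analysis already carried out in the proof of Lemma~\ref{coun-finite}: that argument shows that if an expansion of a point in $(0,\beta-k)$ is \emph{not} one that forces uncountably many expansions, then after the $T^{\pm}$/$C_r$ normalisations it must terminate in $0^\infty$, $(2k+1)^\infty$, or $((k+1)k)^\infty$. The missing half is to rule out, for $x\in\mathcal{S}$ specifically, the ``uncountable'' alternatives — i.e. to show an $x\in\mathcal{S}$ cannot have an expansion containing infinitely many blocks in $B^2$ with arbitrary digits, nor infinitely many free $a_ib_i$ choices. For this I would exploit that $x\in\mathcal{S}$ means $x=(p\beta+q)/(k+1)^n$ with $p,q,n$ fixed: applying the tail map $\sigma$ of an expansion gives $\beta^j x - (\text{integer combination of }\beta^{-i})$, which by Lemma~\ref{fn} again lies in $\mathcal{S}$ with a \emph{controlled} denominator (still a power of $k+1$, shifted), and — crucially — the relevant numerators $p_j$ stay in a \emph{bounded} set, because each tail value lies in $[0,\beta-k]$ and the $F_i$ grow geometrically. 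A bounded set of $\mathcal S$-parameters with bounded denominator is finite, so the orbit of tails under shifting visits only finitely many points; hence the expansion is eventually periodic, and a short check of which periodic tails are compatible with the digit constraints pins them down to the three listed tails. Finally I would count: a word of finite length over $\{0,\ldots,2k+1\}$ followed by one of three admissible tails — countably many in total — and note that by Lemma~\ref{coun-finite}'s transformations every genuine expansion of $x$ maps to exactly one such normal form, the fibres of that map being finite (bounded by the number of $C_r/T$ ``undo'' choices, which is finite because each changes a digit-sum by a fixed amount within a bounded word). Hence $\mathcal{E}_{\beta,m}(x)$ is countable.

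The main obstacle I expect is the ``bounded orbit'' claim: one must show that iterating the shift along an arbitrary expansion of a fixed $x\in\mathcal{S}$ produces points whose $\mathcal{S}$-representation $(p_j,q_j,n_j)$ stays in a finite set. The denominator part is easy ($n_j$ only grows, but the value stays in $[0,\beta-k]$ so after clearing it one gets integers of bounded size), but making the bound on $p_j,q_j$ uniform requires care with the alternating $(-\,(k+1)/\beta)^i$ terms from Lemma~\ref{fn} and with the fact that $1,\beta$ are $\mathbb{Q}$-linearly independent, so that the integer pair $(p_j,q_j)$ is \emph{uniquely} determined by the real number — only then does ``bounded real value'' translate into ``finitely many parameter pairs''. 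Once that finiteness is in hand, eventual periodicity and the countable count are routine, paralleling Sidorov--Vershik's golden-ratio argument.
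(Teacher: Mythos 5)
Your strategy (show the shifted tails of any expansion of $x\in\mathcal{S}$ live in a finite set, deduce eventual periodicity, then count) is a genuinely different route from the paper's, and it could in principle be made to work, but as written it has two real gaps. The first is exactly at the point you flag as the ``main obstacle'', and your proposed fix does not close it. $\mathbb{Q}$-linear independence of $1$ and $\beta$ gives you \emph{uniqueness} of the pair $(p_j,q_j)$ representing a given tail value, but it does not convert ``the tail values lie in the bounded interval $[0,\beta-k]$'' into ``the pairs $(p_j,q_j)$ are bounded'': the set $\{p\beta+q: p,q\in\mathbb{Z}\}$ is dense in $\mathbb{R}$, so a bounded interval contains infinitely many such points with $|p|,|q|$ arbitrarily large. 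What actually makes the orbit finite is the Pisot property of $\beta$: its Galois conjugate $\beta'$ (with $\beta+\beta'=k+1$, $\beta\beta'=-(k+1)$) satisfies $|\beta'|<1$, so applying the conjugation to the recursion $x_{j+1}=\beta x_j-\varepsilon_{j+1}$ shows the conjugate values $p_j\beta'+q_j$ are also uniformly bounded, and only then does invertibility of $\bigl(\begin{smallmatrix}\beta&1\\ \beta'&1\end{smallmatrix}\bigr)$ bound $(p_j,q_j)$. Without this step the whole finiteness claim, and hence eventual periodicity, is unsupported.

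The second gap is that finiteness of the tail set does not by itself give countably many expansions: the expansions of $x$ are the infinite paths in a finite transition graph, and such a path set is countable only if no reachable vertex carries two independent cycles; that nondegeneracy is the real combinatorial content and you do not establish it. Your shortcut --- that every expansion must end in $0^\infty$, $(2k+1)^\infty$ or $((k+1)k)^\infty$ --- is not something Lemma \ref{coun-finite} proves (its transformations produce \emph{some} expansion with such a tail, not a normal form of \emph{every} expansion), and the graph certainly contains other cycles (any purely periodic point $0.w^\infty$ in $[0,\beta-k]$ gives one), so one would still have to show those cycles are not reachable from points of $\mathcal{S}$, or that reaching them forces only countably many continuations; the accompanying claim that the fibres of the $C_r/T$ normalisation are finite is likewise asserted rather than proved. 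For comparison, the paper avoids all of this: assuming $x\in\mathcal{S}$ (normalised so that $x<1$) had uncountably many expansions, it adds the finite expansion of $1-x\in\mathcal{S}$ digitwise to uncountably many expansions of $x$ sharing a common prefix, peels off the carries position by position, and produces uncountably many expansions of the number $1$, contradicting Lemma \ref{1}. That reduction is much shorter and needs neither the conjugate bound nor any cycle analysis.
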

\begin{proof} By Remark \ref{finite-coun}, we only need show that any number in $\mathcal{S}$ can not have uncountably many expansions.

To this end, we suppose that
  $x\in\mathcal{S} $ has have uncountably many expansions. Considering for $\beta^{-n}x$ instead, we may assume $x<1.$ Thus
  $$y=1-x\in\mathcal{S}.$$
  By Lemmas \ref{xs} and \ref{finite}, the number $y$ has a finite expansion
   $$y=0. \varepsilon_{1}\ldots\varepsilon_{n}\in\{0,1,\ldots,k+1\}^{\ast}.$$

Since $x$ has uncountable many  expansions, there exist an uncountable index set $\Lambda$ and a block $\omega=\omega_{1}\omega_{2}\ldots\omega_{n}\in\{0,1, \ldots, 2k+1\}^{\ast}$ such that for any $\lambda\in\Lambda$,
 $$\omega_{\lambda}=0. \omega_{1}\omega_{2}\ldots\omega_{n}\omega_{n+1}^{\lambda}\omega_{n+2}^{\lambda}\ldots$$
 is an expansion of $x$, and thus
$$1=x+y=0.z_{1}z_{2}\ldots z_{n}\omega_{n+1}^{\lambda}\omega_{n+2}^{\lambda}\ldots, $$
where $z_{j}=\omega_{j}+\varepsilon_{j}\in\{0,1,\ldots,3k+2\}$ for $1\leq j\leq n$.

\smallskip

Applying the map $T$ to $0.z_{1}z_{2}\ldots z_{n}$, we obtain that $$T(0.z_{1}z_{2}\ldots z_{n})=\widetilde{z_{0}}. \widetilde{z_{1}}\widetilde{z_{2}}\ldots\widetilde{z_{n}}$$
and if $\widetilde{z_{i}}>k+1$, then we have
$$\left\{
   \begin{array}{ll}
     \widetilde{z_{i+1}}\leq k, & \hbox{for $i=1,2,\ldots, n$;} \\
     \widetilde{z_{i-1}}\leq k, & \hbox{for $i=2,3,\ldots, n$.}
   \end{array}
\right.\eqno{(*)}$$
Since $0.z_{1}z_{2}\ldots z_{n}<x+y=1$ and $T$ preserves the value, $\widetilde{z_{0}}=0$. So we may
suppose that the expansion $0.z_{1}z_{2}\ldots z_{n}$ satisfies the above property $(*)$.

\smallskip

We claim that the number $f_{n-1}=1-0.z_{1}z_{2}\ldots z_{n-1}$ has uncountably many expansions.

Since Ind$^{+}(\cdot)$ takes values amongst a countable set, there exists
an uncountable subset $\Lambda_{1}$ of $\Lambda$ such that
Ind$^{+}(\omega_{n+1}^{\lambda}\omega_{n+2}^{\lambda}\ldots)=s$ for some $s\in\{1,2,\ldots\}\cup\{\infty\}$.

If $z_{n}\leq2k+1$, then $$0.0\ldots0z_{n}\omega_{n+1}^{\lambda}\omega_{n+2}^{\lambda}\ldots\in\{0,1,\ldots, 2k+1\}^{\infty}$$ for $\lambda\in\Lambda_{1}$. Hence, $f_{n-1}$ has uncountably many expansions.

If $z_{n}>2k+1$, then for $\lambda\in\Lambda_{1}$, 
$$T^+(0.0\ldots0z_{n}\omega_{n+1}^{\lambda}\omega_{n+2}^{\lambda}\ldots)=0.0\ldots01\xi_{n}\xi_{n+1}\ldots,$$
with $\xi_{i}\in\{0,1,\ldots,2k+1\}$.  Since the restriction of the map $T^{+}$ to sequences with Ind$^{+}(\cdot)=s$ is an injection, these provide   uncountably many expansions of $f_n$.

 The claim then follows. Now we show each $f_{l}=1-0.z_{1}z_{2}\ldots z_{l}$ for $l\in\{ n-1,\ldots,1,0\}$ has uncountable many expansions. For this, we consider the following property  $(P_q)$:

 there exists an uncountable set
 $\Lambda_{q}$ such that
 \begin{itemize}
   \item if   $z_{q+1}\leq 2k+1$, then $f_q$ has the expansions of the form $$ 0.0\ldots00\omega_{q+1}^{\lambda}\omega_{q+2}^{\lambda}\ldots ~(\lambda\in\Lambda_{q});$$
   \item  $z_{q+1}>2k+1$,   then $f_q$ has the expansions of the form $$ 0.0\ldots01\omega_{q+1}^{\lambda}\omega_{q+2}^{\lambda}\ldots ~( \lambda\in\Lambda_{q}).$$
 \end{itemize}

\smallskip

 Suppose   $(P_q)$ holds, then
\begin{eqnarray*}
  f_{q-1} &=& 1-0.z_{1}z_{2}\ldots z_{q-1} \\
&=& 0.0\ldots0(z_{q}+a)\omega_{q+1}^{\lambda}\omega_{q+2}^{\lambda}\ldots
\end{eqnarray*}
where $a=0$  if  $z_{q+1}\leq 2k+1$, and   $a=1$  if  $z_{q+1}> 2k+1$.

If  $z_{q+1}\leq 2k+1$, we reach the property $(P_{q-1})$ using the same argument as above;
if $z_{q+1}> 2k+1$,  then due to Property $(*)$, we have $z_{q}\leq k$. Whence $0.0\ldots0(z_{q}+a)\omega_{q+1}^{\lambda}\omega_{q+2}^{\lambda}\ldots $ is the desired expansion, and we obtain $(P_{q-1})$ also.

Therefore, by induction, we know that any $f_{l}=1-0.z_{1}z_{2}\ldots z_{l}$  has uncountable many expansions. This is a contradiction since $f_0=1$ has only countably many expansions.
\end{proof}

Theorem \ref{theorem} then follows from the above lemmas.

\medskip

Following the similar idea with the proof of Theorem \ref{theorem} and with even less effort, we prove Theorem \ref{theorem2}.

\medskip

\begin{flushleft}
{\bf Acknowledgements}~ This work was supported by  NSFC Nos. 11171123 and 11222111.
\end{flushleft}

\end{document}